\definecolor{darkgray}{rgb}{.412,.412,.412}
\newcolumntype{Y}[1]{>{\centering}m{\dimexpr#1\textwidth-2\tabcolsep-1\arrayrulewidth\relax}}
\newcolumntype{Z}{@{}m{0pt}@{}} 
\newcommand{\T}{\mathcal{T}}
\newcommand{\M}{\mathcal{M}}
\newcommand{\Sigmahat}{\hat{\Sigma}}
\DeclareMathOperator{\Crit}{Crit}
\DeclareMathOperator{\Flat}{Flat}
\DeclareMathOperator{\len}{len}
\newcommand{\id}{\textnormal{id}}
\DeclareMathOperator{\Follower}{Fol}
\newcommand{\fol}[1]{\Follower(#1)}
\theoremstyle{plain}
\newtheorem{theorem}{Theorem}
\newtheorem*{thm1}{Theorem \ref{th:preimages}}
\newtheorem*{thm2}{Theorem \ref{th:equicontinuous}}
\newtheorem*{thm3}{Theorem \ref{th:main}}
\newtheorem{lemma}[theorem]{Lemma}
\newtheorem*{lem9'}{Lemma \ref{lem:perturbcrit}$'$}
\newtheorem*{lem10'}{Lemma \ref{lem:onto}$'$}
\newtheorem*{lem11'}{Lemma \ref{lem:ontopert}$'$}
\newtheorem*{lem12'}{Lemma \ref{lem:unileo}$'$}
\newtheorem*{lem13'}{Lemma \ref{lem:equiunileo}$'$}
\newtheorem{proposition}[theorem]{Proposition}
\newtheorem{corollary}[theorem]{Corollary}
\theoremstyle{definition}
\newtheorem*{question}{Question}
\newtheorem{remark}[theorem]{Remark}
\newtheorem{example}[theorem]{Example}
\begin{document}

\title{Constant Slope Models and Perturbation}
\author{Michal Malek}
\author{Samuel Roth}
\date{}
\maketitle

\begin{abstract}
We sharpen an estimate for the growth rate of preimages of a point under a transitive piecewise monotone interval map. Then we apply our estimate to study the continuity of the operator which assigns to such a map its constant slope model.
\end{abstract}

\section{Introduction}

\subsection*{Motivation}\strut\\
Over 50 years ago, W.~Parry showed that each continuous topologically transitive piecewise monotone interval map is conjugate by an increasing homeomorphism to a map with constant slope. Recently, Ll.~Alsed\`{a} and M.~Misiurewicz pointed out that this constant slope model is unique, and thus it makes sense to study the operator $\Phi$ which assigns to a map its constant slope model. They showed that $\Phi$ is not continuous, essentially because $C^0$ perturbation of the map can lead to a jump in topological entropy. Nevertheless, they conjectured that within the space of transitive maps of a fixed modality, $\Phi$ is continuous at each point of continuity of the topological entropy \cite[page 13]{AM}. This paper confirms that conjecture.

\subsection*{Definitions}\strut\\
We study maps $f:[0,1]\to[0,1]$ which are continuous and \emph{piecewise monotone}, i.e. with a finite set of \emph{critical points}:
\begin{equation*}
\Crit(f)=\{0,1\} \cup \{x \,|\, f \text{ is not monotone on any neighborhood of }x\}.
\end{equation*}
The \emph{modality} of $f$ is the cardinality of $\Crit(f)\cap(0,1)$. We are interested in the spaces
\begin{itemize}
\item $\T$ - the space of topologically transitive piecewise monotone maps,
\item $\T_m$ - the subspace of transitive maps of a fixed modality $m\in\mathbb{N}$, and
\item $\mathcal{H}^+$ - the space of monotone increasing homeomorphisms of $[0,1]$ with itself.
\end{itemize}
Each of these spaces is contained in $\mathcal{C}^0$, the space of all continuous functions from $[0,1]$ to itself. Moreover, we equip our spaces with the topology of uniform convergence given by the usual $C^0$ metric
\begin{equation*}
d(f,g)=\max_{x\in[0,1]} |f(x)-g(x)|.
\end{equation*}
A map $f\in\T$ has \emph{constant slope} $\lambda$ if $|f'(x)|=\lambda$ for $x\notin\Crit(f)$. We say that $\tilde{f}$ is a \emph{constant slope model} for $f$ if $\tilde{f}$ has some constant slope $\lambda$ and there is a homeomorphism $\psi\in\mathcal{H}^+$ such that $f=\psi\circ\tilde{f}\circ\psi^{-1}$.  It is known \cite[Theorem 8.2 and Corollary 1]{AM} that each map $f\in\T$ has a unique constant slope model, the conjugating homeomorphism is likewise unique\footnote{This is not explicitly stated in \cite{AM}, but is an easy corollary of \cite[Theorem 8.2]{AM}. If we have two conjugating homeomorphisms $\tilde{f}=\psi^{-1}\circ f \circ \psi = \phi^{-1} \circ f \circ \phi$, then $\psi^{-1}\circ\phi\circ\tilde{f}\circ\phi^{-1}\circ\psi=\tilde{f}$, so that by \cite[Theorem 8.2]{AM}, $\psi^{-1}\circ\phi=\id$, that is, $\psi=\phi$.}, and the constant slope is the exponential of the topological entropy of $f$. Thus, we are interested in two operators and one real-valued function on the space $\T$, namely
\begin{alignat*}{2}
\Phi&:\T\to\T, &\qquad \Phi(f)&=\text{the constant slope model for }f,\\
\Psi&:\T\to\mathcal{H}^+, & \Psi(f)&=\text{the conjugating homeomorphism, and}\\
h&:\T\to\mathbb{R}, & h(f)&=\text{the topological entropy of }f.
\end{alignat*}
Since conjugacy preserves modality, $\Phi$ preserves the spaces $\T_m$. We denote the restrictions of our operators to these spaces by $\Phi_m:\T_m \to\T_m$, $\Psi_m:\T_m \to\mathcal{H}^+$.

The role of these operators is summarized in the following commutative diagram.
\begin{equation*}
\begin{CD}
[0,1] @>\Phi(f)>> [0,1]\\
@V{\Psi(f)}VV @VV{\Psi(f)}V\\
[0,1] @>>f> [0,1]
\end{CD}
\end{equation*}
\subsection*{Results}\strut\\
We start with a theorem concerning the growth rate of the number of iterated preimages of an arbitrary point $x\in[0,1]$ under a transitive, piecewise monotone map $f$. It is already known that the exponential growth rate of the sequence $\left(\#f^{-n}(x)\right)$ gives the entropy of $f$, \cite[Theorem 1.2]{MR}. We show that the ``subexponential part'' of this sequence does not converge to zero.

\begin{theorem}\label{th:preimages}
Let $f\in\T$ and fix $x\in[0,1]$. Then
\begin{equation*}
\limsup_{n\to\infty} \frac{\# f^{-n}(x)}{e^{n h(f)}} > 0.
\end{equation*}
\end{theorem}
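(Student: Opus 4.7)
The plan is to reduce to the constant slope case and then count preimages of $x$ by producing many laps of $f^n$ whose image is long, and extending those images by a bounded number of additional iterates to cover all of $[0,1]$. Since $f$ is conjugate by an increasing homeomorphism to its constant slope model $\Phi(f)$, and conjugacy preserves both $\#f^{-n}(x)$ and $h(f)$, the first step is to pass to the model and assume that $f$ itself has constant slope $\lambda=e^{h(f)}$.

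In the constant slope setting, let $I_1^n,\ldots,I_{\ell_n}^n$ be the laps of $f^n$ and set $J_i^n=f^n(I_i^n)$. Because $f^n$ has slope of absolute value $\lambda^n$ on each lap, $|J_i^n|=\lambda^n|I_i^n|$, giving the key identity $\sum_i|J_i^n|=\lambda^n$. For $y$ away from the critical values of $f^n$, $\#f^{-n}(y)$ is the number of $i$ with $y\in J_i^n$. Fixing $\delta>0$ and letting $N_n(\delta)=\#\{i:|J_i^n|\geq\delta\}$, the bounds $|J_i^n|\leq 1$ and $\sum_i|J_i^n|=\lambda^n$ yield
\[
N_n(\delta)\;\geq\;\lambda^n-\delta\,\ell_n.
\]
I intend to choose $\delta$ small enough relative to a uniform bound $\ell_n\leq C\lambda^n$ to obtain $N_n(\delta)\geq\lambda^n/2$ for all $n$.

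Next I will use that every transitive piecewise monotone interval map of positive entropy is locally eventually onto: each open subinterval $U\subseteq[0,1]$ has $f^k(U)=[0,1]$ for some $k$. A Hausdorff-compactness argument on the space of closed subintervals of length at least $\delta$ then gives a uniform $M=M(\delta)$ with $f^M(J)=[0,1]$ for every subinterval $J$ of length $\geq\delta$. Applied to the long laps, $f^{n+M}(I_i^n)=f^M(J_i^n)=[0,1]\ni x$, so each long lap contains a preimage $z_i\in f^{-(n+M)}(x)$. Since each point lies in at most two laps, the $z_i$ give at least $N_n(\delta)/2$ distinct preimages, yielding
\[
\#f^{-(n+M)}(x)\;\geq\;\tfrac{1}{2}N_n(\delta)\;\geq\;\tfrac{\lambda^n}{4}
\]
for every $n$. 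Dividing by $\lambda^{n+M}$ gives the uniform lower bound $(4\lambda^M)^{-1}$, so the limsup (indeed the liminf) is positive.

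The main obstacle will be the bound $\ell_n\leq C\lambda^n$. The exponential rate $\ell_n^{1/n}\to\lambda$ is classical, but excluding subexponential blow-up is itself a refinement in the same spirit as the theorem. I plan to obtain it from Milnor--Thurston kneading theory: the generating function $\sum_n\ell_n t^n$ is rational with smallest pole at $t=\lambda^{-1}$, and transitivity (via Perron--Frobenius for the associated Markov extension) forces this pole to be simple, so that $\ell_n=\Theta(\lambda^n)$, which is exactly what is needed. If that route is inconvenient, an alternative is to allow $\delta$ to shrink slowly with $n$ and $M$ to grow correspondingly, and argue that the loss factor $\lambda^{-M(n)}$ still survives along a subsequence.
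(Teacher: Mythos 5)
Your overall plan—pass to the constant-slope model, show that a positive fraction of the $\ell_n$ laps of $f^n$ have images of definite length, and then push those long intervals onto all of $[0,1]$ with a bounded number of further iterates—is clean and, if every step held, would give a stronger (liminf) conclusion than the paper proves. But the argument as written has a genuine gap at the step $\ell_n\le C\lambda^n$, and this is precisely where the hard content of the theorem lives.

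The lap-number generating function $L(t)=\sum \ell_n t^n$ is rational only for Markov (finite-type) maps; a general transitive piecewise monotone map need not be Markov, and in that case $L(t)$ is not a rational function, so the ``simple pole at $t=\lambda^{-1}$'' argument is not available. The Milnor--Thurston kneading determinant $D(t)$ is a power series (not in general rational), and while $\lambda^{-1}$ is its smallest zero in $(0,1)$ when $h(f)>0$, that controls only the exponential rate $\ell_n^{1/n}\to\lambda$, which you already concede is not enough. What you actually need is that the Hofbauer/Buzzi Markov extension of $f$ is \emph{strongly positive recurrent} on its positive-entropy irreducible components (this is Buzzi's theorem on subshifts of quasi-finite type, the same input the paper uses), and even then one must argue carefully about contributions from the infinitely many transient vertices before concluding $\ell_n=O(\lambda^n)$. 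In other words, your route does not bypass the paper's main dynamical input—it relocates it to an uncited and nontrivial estimate. The ``alternative'' with $\delta=\delta(n)\to 0$ does not rescue this: chasing the dependence of $M(\delta)$ on $\delta$ shows you would still need $\liminf_n \ell_n/\lambda^n<\infty$, which again requires essentially the same recurrence machinery.

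A secondary issue: ``locally eventually onto'' and its uniform version are equivalent to topological weak mixing for piecewise monotone maps, not to transitivity; a transitive non-weak-mixing $f$ swaps two halves $[0,e]$ and $[e,1]$ and no small interval is ever mapped onto all of $[0,1]$. This is a fixable technicality (work with $f^2$ on each half and allow an extra iterate, as the paper's appendix does), but it must be addressed for a complete proof.

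By contrast, the paper's proof is more direct: it goes straight to the Hofbauer diagram, picks a positive-recurrent strongly connected component $\mathcal{D}_0$ of entropy $h(f)$, shows (Lemma~\ref{lem:fol-loops}) that length-$n$ loops at a vertex $\alpha$ yield pairwise disjoint subintervals of $\fol\alpha$ mapping homeomorphically onto $\fol\alpha$ under $f^n$, and then uses density of backward orbits to place a preimage of $x$ inside the cylinder of $\alpha$. Positive recurrence alone ($\limsup l_n\lambda^{-n}>0$, a lower bound on loop counts) then finishes the proof; no upper bound on $\ell_n$ is ever needed. If you want to salvage your approach, the honest way is to cite strong positive recurrence of the Markov extension of transitive piecewise monotone maps and derive $\ell_n=O(\lambda^n)$ from it—but at that point you have reproduced the paper's main input and gained little.
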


Our second result allows us to verify that a family of homeomorphisms in $\mathcal{H}^+$ is an equicontinuous family.

\begin{theorem}\label{th:equicontinuous}
If $K$ is a compact subset of $\T_m$, then $\Psi_m(K)$ is an equicontinuous family.
\end{theorem}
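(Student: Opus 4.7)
The plan is to argue by contradiction, using compactness of $K$, the Lipschitz constant of iterates of the constant slope model, and an equi-uniform locally eventually onto property for $K$ (a preliminary lemma that I expect to be established earlier in the paper).

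Assume $\Psi_m(K)$ is not equicontinuous. Then one obtains $\varepsilon > 0$, maps $f_n \in K$, and points $y_n < y_n'$ in $[0,1]$ with $y_n' - y_n \to 0$ yet $\psi_n(y_n') - \psi_n(y_n) \geq \varepsilon$, where $\psi_n := \Psi_m(f_n)$. By compactness of $K$, a subsequence satisfies $f_n \to f$ in $K$. Since topological entropy is upper semicontinuous on the space of continuous piecewise monotone maps of modality at most $m$, the quantity $\Lambda := \sup_{g \in K} e^{h(g)}$ is finite, and the slopes $\lambda_n := e^{h(f_n)}$ of the constant slope models $\tilde{f}_n := \Phi(f_n)$ are uniformly bounded by $\Lambda$.

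The crucial ingredient is the equi-uniform locally eventually onto property: there exists an integer $N = N(K,\varepsilon)$ such that $g^N(J) = [0,1]$ for every $g \in K$ and every subinterval $J \subseteq [0,1]$ of length at least $\varepsilon$. Applying this to $J_n := \psi_n([y_n, y_n']) = [\psi_n(y_n), \psi_n(y_n')]$, which has length $\geq \varepsilon$, yields $f_n^N(J_n) = [0,1]$. Using the conjugacy $\tilde{f}_n^N = \psi_n^{-1} \circ f_n^N \circ \psi_n$,
\begin{equation*}
\tilde{f}_n^N([y_n, y_n']) \;=\; \psi_n^{-1}(f_n^N(J_n)) \;=\; \psi_n^{-1}([0,1]) \;=\; [0,1].
\end{equation*}
But $\tilde{f}_n$ has slopes $\pm \lambda_n$, so $\tilde{f}_n^N$ is $\lambda_n^N$-Lipschitz. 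Choosing preimages of $0$ and $1$ inside $[y_n, y_n']$ gives $1 \leq \lambda_n^N (y_n' - y_n) \leq \Lambda^N (y_n' - y_n)$, which forces $y_n' - y_n \geq \Lambda^{-N} > 0$, contradicting $y_n' - y_n \to 0$.

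The main obstacle is thus the equi-uniform leo lemma itself; once it is available, the argument above is a one-line Lipschitz/diameter estimate on the constant slope side, with the bound $\Lambda < \infty$ coming for free from upper semicontinuity of $h$. I expect Theorem~\ref{th:preimages} to enter through that lemma, since the sharpened preimage-growth estimate is a natural quantitative tool for turning transitivity into a uniform covering rate over a compact family, especially in the absence of any continuity hypothesis on $h|_K$.
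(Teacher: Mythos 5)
Your overall strategy matches the paper's: prove an equi-uniform locally eventually onto (leo) estimate over $K$, then combine it with the common Lipschitz bound $\Lambda$ for the constant slope models; the contradiction framing is simply the contrapositive of the paper's direct argument. However, the equi-leo lemma in the exact form you invoke---a single $N$ with $g^N(J)=[0,1]$ for every $g\in K$ and every interval $J$ of length at least $\varepsilon$---is \emph{false} as soon as $K$ contains a transitive map that is not weak mixing. Such a $g$ has a unique fixed point $e$ and interchanges $[0,e]$ with $[e,1]$, so once some iterate $g^n(J)$ equals one of those halves, every subsequent image is again a single half and never all of $[0,1]$ (the standard $W$-shaped bimodal map with fixed point $\tfrac12$ is a concrete instance). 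The paper gets around this by proving that $g^{N}(J)\cup g^{N+1}(J)=[0,1]$ (Lemma~\ref{lem:equiunileo} in the weak mixing case, Lemma~\ref{lem:equiunileo}$'$ in the appendix for the general case) and then observing that one of the two sets must have length at least $\tfrac12$; your Lipschitz step then runs verbatim with $1$ replaced by $\tfrac12$ and $N$ by $N+1$, giving $y_n'-y_n\ge\tfrac12\Lambda^{-(N+1)}$. So the gap is real, but the repair is exactly the one the paper makes.

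Two smaller points. First, the justification you give for $\Lambda<\infty$ is not correct: topological entropy is \emph{not} upper semicontinuous on $\T_m$---Example~2 of the paper exhibits a point of $\T_5$ with $\limsup_{g\rightrightarrows f}h(g)\ge\log2>h(f)$. What does hold, by Misiurewicz's theorem on jumps of entropy, is the bound $\limsup_{g\rightrightarrows f}h(g)\le\max\{h(f),\log2\}$, which gives local boundedness of $h$ and hence, by compactness of $K$, the finiteness of $\Lambda$. Second, Theorem~\ref{th:preimages} does not feed into the equi-leo lemma; that lemma rests on Ruette's uniform-leo result for weak mixing piecewise monotone maps together with elementary perturbation estimates on critical points and endpoint accessibility. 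Theorem~\ref{th:preimages} is used only later, in Section~\ref{sec:noflatspots}, to show that a limiting semiconjugacy has no flat spots.
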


In the spirit of ``dynamical topology,''%
\footnote{The term dynamical topology was coined in \cite{KMS}. Simply put, it means we investigate topological properties of spaces of maps and operators on those spaces which are defined in terms of their dynamical properties.}
we may also state this result in purely topological terms. In light of the Arzela-Ascoli theorem, this says that the $\Psi_m$ image of a compact set is precompact, i.e., has a compact closure in $\mathcal{C}^0$.

These two theorems allow us to prove the main result of our paper, namely,
\begin{theorem}\label{th:main}
If a sequence of maps $g_n\in\T_m$ converges uniformly to $f\in\T_m$ and if $h(g_n)\to h(f)$, then the constant slope models $\Phi(g_n)$ converge uniformly to $\Phi(f)$.
\end{theorem}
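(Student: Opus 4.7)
The plan is a compactness/subsequence argument: I will show that every subsequence of $(g_n)$ admits a further subsequence along which $\Phi(g_n)\to\Phi(f)$ uniformly, which forces the whole sequence to converge. Fix an arbitrary subsequence and write $\psi_n = \Psi(g_n)$, $\lambda_n = e^{h(g_n)}$, so that $g_n = \psi_n\circ\Phi(g_n)\circ\psi_n^{-1}$ with $\Phi(g_n)$ of constant slope $\lambda_n$. The set $K := \{g_n\}_n \cup \{f\}$ is compact in $\T_m$, so Theorem \ref{th:equicontinuous} gives that $\{\psi_n\}$ is equicontinuous; by Arzela-Ascoli I extract a further subsequence with $\psi_n\to\psi^*$ uniformly, where $\psi^*:[0,1]\to[0,1]$ is continuous, non-decreasing, and fixes $\{0,1\}$. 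Since $\lambda_n\to\lambda:=e^{h(f)}$ the family $\{\Phi(g_n)\}$ is uniformly Lipschitz and hence equicontinuous, so after a further extraction $\Phi(g_n)\to F$ uniformly, where $F$ is piecewise linear with slopes $\pm\lambda$ on its monotone pieces.

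Passing to the limit in the conjugacy relation $\psi_n\circ\Phi(g_n) = g_n\circ\psi_n$ yields the semi-conjugacy $\psi^*\circ F = f\circ\psi^*$. If $\psi^*\in\mathcal{H}^+$ (i.e.\ strictly increasing), then $F = (\psi^*)^{-1}\circ f\circ\psi^*$ is a constant slope model for $f$, forcing $F=\Phi(f)$ by the uniqueness statement in \cite{AM}. Uniform convergence $\Phi(g_n)\to\Phi(f)$ then follows, since uniform convergence of the monotone $\psi_n$ to a homeomorphism $\psi^*$ implies $\psi_n^{-1}\to(\psi^*)^{-1}$ uniformly (a standard consequence of uniform continuity of $(\psi^*)^{-1}$ on the compact interval $[0,1]$), and the conjugacy formula $\Phi(g_n) = \psi_n^{-1}\circ g_n\circ\psi_n$ passes to the limit.

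The main obstacle, and where I expect Theorem \ref{th:preimages} to enter, is establishing strict monotonicity of $\psi^*$. Suppose for contradiction that $\psi^*$ takes a constant value $c$ on a maximal non-degenerate interval $[a,b]$, so that $\mu^*:=\psi^*_*\mathrm{Leb}$ has an atom at $c$ of mass at least $b-a$. Because $F$ has slope $\pm\lambda\neq 0$ on monotone pieces, $F$ cannot collapse a non-degenerate interval, and the semi-conjugacy then forces $f(c)$, and inductively every $f^k(c)$, to be an atom of $\mu^*$. On any fiber $(\psi^*)^{-1}(f^k(c))$ containing no critical point of $F$, the constant slope gives $\mu^*(\{f^{k+1}(c)\})\geq \lambda\,\mu^*(\{f^k(c)\})$; iterated pure expansion would drive the atomic masses past $1$, contradicting $\mu^*([0,1])=1$. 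The only way to avoid this is if the forward $f$-orbit of $c$ keeps revisiting the at most $m$ critical values of $F$, a restrictive situation which I expect to rule out using Theorem \ref{th:preimages}: the preimage growth estimate $\#f^{-k}(c)\geq\delta\lambda^k$ along an infinite sequence of $k$'s, combined with the identity
\[
\sum_{y\in f^{-k}(c)}\mu^*(\{y\}) \;=\; \bigl|F^{-k}\bigl((\psi^*)^{-1}(c)\bigr)\bigr| \;=\; \lambda^{-k}\!\int_{(\psi^*)^{-1}(c)}\!\#F^{-k}(y)\,dy,
\]
supplies the additional backward-counting information needed to refine the contradiction in the degenerate case. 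Making this contradiction precise is the most technical step of the argument.
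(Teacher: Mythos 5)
Your outer scaffold is the same as the paper's: form the compact set $K=\{g_n\}\cup\{f\}$, invoke Theorem \ref{th:equicontinuous} for equicontinuity of $\{\psi_n\}$, pass through Arzela--Ascoli to a subsequential limit $\psi^*$, obtain the semiconjugacy $\psi^*\circ F=f\circ\psi^*$ in the limit, and appeal to uniqueness of the constant slope model. The entire weight of the proof, however, rests on showing that $\psi^*$ has no flat spots, and this you leave as an admitted gap (``Making this contradiction precise is the most technical step''). The forward-iteration sketch you give does fail exactly where you fear: if the forward orbit of a flat value keeps hitting critical points of $f$, the factor-$\lambda$ growth degrades (the paper's Remark~\ref{rem:c} shows the best one can say at a critical point is growth by $\lambda/2$, which may be $<1$), and a periodic critical orbit then kills the argument. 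The backward-counting identity you propose does not close the gap either: the equality $F^{-k}\bigl((\psi^*)^{-1}(c)\bigr)=\bigcup_{y\in f^{-k}(c)}(\psi^*)^{-1}(y)$ requires control over how preimages distribute across the fiber, and more importantly flat spots at backward images of $c$ scale by $\lambda^{-1}$ only through non-critical preimages, so you must first exhibit a full preimage tree that avoids $\Crit(f)$.

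The paper's Proposition~\ref{prop:homeomorphism} supplies exactly the missing construction. It works strictly backward: it builds the tree $\Gamma$ of all non-critical backward orbits of a flat value $b$, chooses an infinite non-critical backward ray $\ldots\mapsto b_{-2}\mapsto b_{-1}\mapsto b$, and then locates an index $n_0$ so that $b_{-n_0}$ lies in the forward orbit of \emph{no} critical point (this uses the absence of loops in $\Gamma$, which in turn uses $\lambda>1$). Once that is done, \emph{every} preimage of $b_{-n_0}$ lies in $\Gamma$, the flat-spot lengths at level $i$ are exactly $\lambda^{-i}\len(\psi^{-1}(b_{-n_0}))$, and Theorem~\ref{th:preimages} gives $\sum_i\lambda^{-i}\#f^{-i}(b_{-n_0})=\infty$, contradicting that the flat spots are disjoint subintervals of $[0,1]$. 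A secondary issue in your write-up: you assert that the uniform limit $F$ of the $\Phi(g_n)$ is piecewise linear with slopes $\pm\lambda$; a uniform limit of constant-slope maps need only be Lipschitz, not constant-slope, and the paper instead establishes that $\psi^{*-1}\circ f\circ\psi^*$ has constant slope directly via the ``growth of rectangles'' Proposition~\ref{prop:rectangles} together with Lemma~\ref{lem:checkCS}, once $\psi^*$ is known to be a homeomorphism.
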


As a corollary, we get a positive answer for the conjecture of Alseda and Misiurewicz, 
\begin{corollary}
The operator $\Phi_m$ is continuous at each continuity point of $h|_{\T_m}$. In particular, $\Phi_m$ is continuous for $m\leq 4$.
\end{corollary}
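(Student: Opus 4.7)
My plan is to derive the corollary by a direct appeal to Theorem~\ref{th:main}, combined with a known continuity result for the topological entropy in the low-modality setting; no new dynamical work is required beyond what has been established above.

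For the first assertion, I would fix $f\in\T_m$ at which $h|_{\T_m}$ is continuous and let $(g_n)\subset\T_m$ be an arbitrary sequence converging uniformly to $f$. The continuity of $h|_{\T_m}$ at $f$ gives $h(g_n)\to h(f)$, so both hypotheses of Theorem~\ref{th:main} are in force, and the theorem yields $\Phi(g_n)\to\Phi(f)$ uniformly. Since the sequence $(g_n)$ was arbitrary, $\Phi_m$ is continuous at $f$, which is exactly the first claim.

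For the second assertion, I would invoke the known fact from one-dimensional dynamics that $h|_{\T_m}$ is continuous at every point of $\T_m$ when $m\leq 4$. The potential discontinuities of $h$ on $\mathcal{C}^0$ arise from horseshoes being created or destroyed under perturbation; in the transitive regime with few critical points, such jumps are ruled out. Once continuity of $h|_{\T_m}$ is available, applying the first assertion pointwise yields continuity of $\Phi_m$ throughout $\T_m$ for $m\leq 4$.

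The only obstacle I anticipate is bibliographic: one must identify and cite the precise statement in the literature guaranteeing continuity of $h|_{\T_m}$ for $m\leq 4$ (presumably available in the Alsed\`a--Misiurewicz paper already in the bibliography, or in the standard one-dimensional dynamics references). Once that citation is in hand, the corollary is a one-line deduction from Theorem~\ref{th:main}.
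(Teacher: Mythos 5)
Your proposal is correct and follows the route the paper itself intends: the corollary is simply an immediate consequence of Theorem~\ref{th:main}, since continuity of $h|_{\T_m}$ at $f$ converts the hypothesis ``$g_n\rightrightarrows f$ and $h(g_n)\to h(f)$'' into just ``$g_n\rightrightarrows f$.'' For the second assertion, the fact that $h|_{\T_m}$ has no discontinuity points when $m\leq 4$ is exactly what is established in \cite{M2} (the same reference supplies the modality-$5$ counterexample used in the paper's second example), so your ``bibliographic obstacle'' resolves to that citation and the argument closes.
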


\subsection*{Sharpness of the Results}\strut\\
We remark that all of the hypotheses in Theorem~\ref{th:main} are essential. This is illustrated by the following two examples.

\begin{example} \emph{Perturbation with a jump in modality.}\\
\footnotesize
For each value $0\leq t\leq\frac{1}{4}$, put $a=\frac12- t$, $b=\frac12+ t$, $\lambda=3+2t$, and let $\tilde{g}_t$ be the (unique) map with constant slope $\lambda$, 10 critical points $0=c_0<c_1<\cdots<c_9=1$, and critical values $g(c_0)=a$, $g(c_2)=a+t^2$, $g(c_3)=0$, $g(c_4)=b+t^2$, $g(c_5)=a-t^2$, $g(c_6)=1$, $g(c_7)=b-t^2$, and $g(c_9)=b$. For $t>0$ let $\psi_t$ be the ``connect-the-dots'' map with dots at $(0,0)$, $(a,t)$, $(b,1-t)$, and $(1,1)$. Then put $g_t=\psi_t\circ\tilde{g}_t\circ\psi^{-1}_t$. Finally, let $f$ be the full 3-horseshoe, i.e. the ``connect-the-dots'' map with dots $(0,0)$, $(\frac13,1)$, $(\frac23,0)$, and $(1,1)$. As $t\to0$ we have uniform convergence of modality-8 maps to a modality-2 map $g_t \rightrightarrows f$ and convergence of entropy $h(g_t)=\log(3+2t) \to h(f)$, but the constant slope models converge to the ``wrong'' limit $\Phi(g_t)=\tilde{g}_t \rightrightarrows \tilde{g}_0 \neq f=\Phi(f)$. We omit the proofs of transitivity (when $t>0$) and uniform convergence -- these proofs are tedious but routine calculations, since all maps involved are piecewise affine.\\
\begin{tabular}{Y{0.2} Y{0.2} Y{0.2} Y{0.2} Y{0.2}}
\tiny $g_t$ &
\tiny $f$ & 
\tiny $\psi_t$ &
\tiny $\tilde{g}_t$ & 
\tiny $\tilde{g}_0$\tabularnewline
\includegraphics[width=2.7cm]{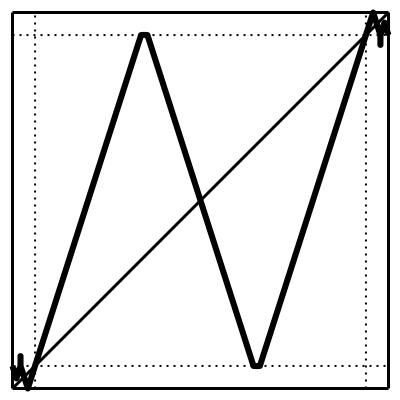} &
\includegraphics[width=2.7cm]{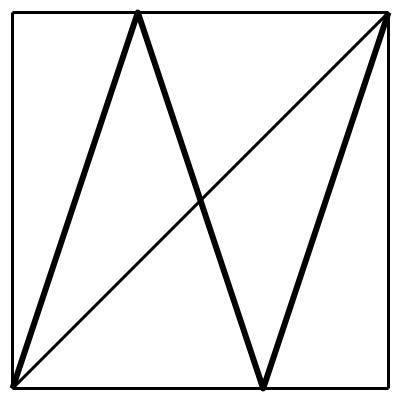} &
\includegraphics[width=2.7cm]{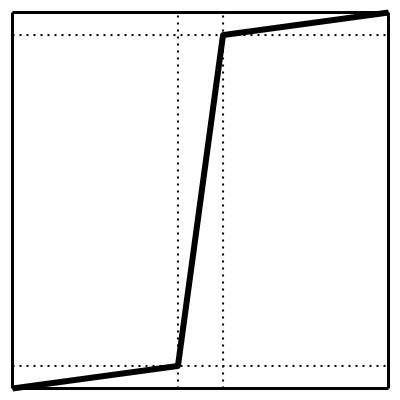} &
\includegraphics[width=2.7cm]{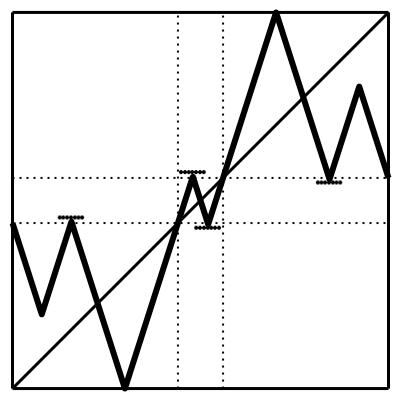} &
\includegraphics[width=2.7cm]{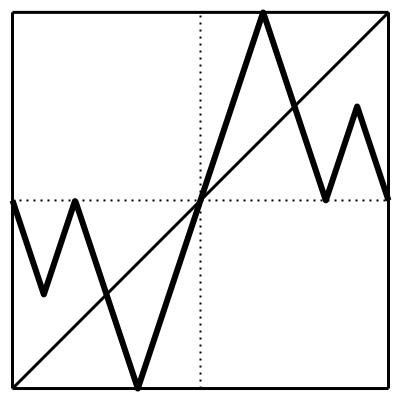} \tabularnewline
\end{tabular}\\[.8em]
\begin{minipage}{.60\textwidth}
Another perspective: when the map $\Phi$ carries the arc $(g_t)_{t\in[0,\frac14]}$ (writing $g_0=f$ for the endpoint) from $\T$ to the space of constant slope models $\Phi(\T)$, it ``breaks off'' the endpoint. Notice that the point $\tilde{g}_0$ is not even in $\Phi(\T)$ because it is not transitive.
\end{minipage}\hspace{.05\textwidth}%
\begin{minipage}{.35\textwidth}
\includegraphics{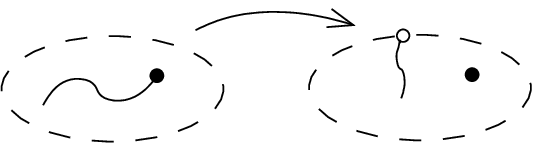}
\begin{picture}(0,0)
\put(75,53){\scriptsize$\Phi$}
\put(28,4){\scriptsize$\T$}
\put(115,4){\scriptsize$\Phi(\T)$}
\put(47,26){\tiny$f$}
\put(130,23){\tiny$\Phi(f)$}
\put(117,48){\tiny$\tilde{g}_0$}
\end{picture}
\end{minipage}
\end{example}

\begin{example} \emph{Perturbation with a jump in entropy.}\\
\footnotesize
Let $f:[0,72]\to[0,72]$ be the ``connect-the-dots'' map with dots $(0,32)$, $(20,52)$, $(24,60)$, $(25,58)$, $(32,72)$, $(52,32)$, $(58,20)$, $(60,24)$, $(72,0)$. The map has modality $5$. It was introduced in~\cite{M2} as a point of discontinuity of $h:\T_5\to\mathbb{R}$. Numerical calculations give $h(f)\approx \log 1.81299$, while~\cite{M2} shows rigorously that $h(f)<\log 2$. On the other hand, $f$ has a 2-cycle consisting of critical points $24\mapsto 60\mapsto 24$. Form $g_t$ by perturbing $f$ on the $t$-neighborhoods of those critical points, increasing the slope from 2 to 3 as in the figure below. As $t\to 0$ we have uniform convergence $g_t \rightrightarrows f$ within the space $\T_5$ (the proof of transitivity is omitted). On the other hand, $h(g_t)\geq\log2$ because $g_t^2$ has a 4-horseshoe. Thus, we may be sure that $\Phi(g_t)\not\rightrightarrows\Phi(f)$, because the slopes do not converge to the slope of $\Phi(f)$, see~\cite[Lemma 8.3]{M2}.
\begin{center}
\begin{tabular}{Y{.25}Y{.05}Y{.25}Y{.05}Y{.25}}
\tiny A 2-cycle of critical points &&
\tiny Perturbation near the 2-cycle &&
\tiny The 2nd iterates near\\one point of the 2-cycle \tabularnewline
\includegraphics[width=3.3cm]{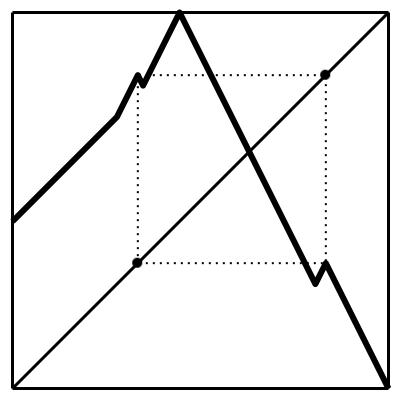} &&
\includegraphics[width=3.3cm]{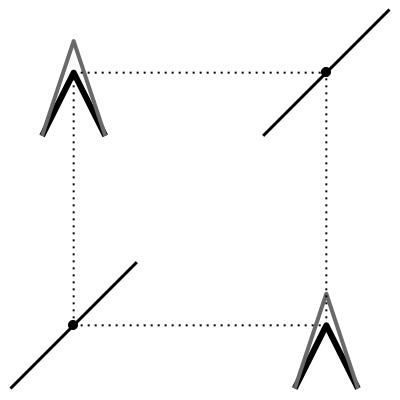} &&
\includegraphics[width=3.3cm]{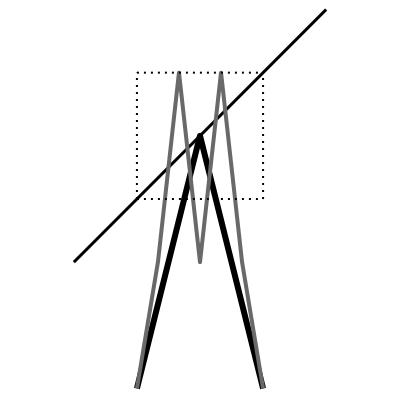} \tabularnewline
\tiny \rule[.6ex]{2em}{.2em} $f$ &&
\tiny \rule[.6ex]{2em}{.2em} $f$ \hspace{1em} 
\tiny {\color{darkgray} \rule[.6ex]{2em}{.15em}} $g_t$ &&
\tiny \rule[.6ex]{2em}{.2em} $f$ \hspace{1em}
\tiny {\color{darkgray} \rule[.6ex]{2em}{.15em}} $g^2_t$ \tabularnewline
\end{tabular}
\end{center}
\end{example}

\subsection*{Outline of the Paper}\strut\\
Section~\ref{sec:preimages} counts preimages to prove Theorem~\ref{th:preimages}. The central observation is that the strongly connected components of the Hofbauer diagram corresponding to a map $f\in\T$ are known to be positive recurrent.

Section~\ref{sec:equicontinuous} establishes our equicontinuity result, Theorem~\ref{th:equicontinuous}. The proof requires us to upgrade several facts about mixing piecewise monotone maps to use with perturbation.

Sections~\ref{sec:flatspots} and~\ref{sec:noflatspots} examine more closely what happens when we have a convergent sequence $g_n \rightrightarrows f$ in $\T_m$. We establish several properties of each (subsequential) limit $\psi$ of the corresponding homeomorphisms $\psi_n=\Psi(g_n)$.

Section~\ref{sec:wrapup} applies these properties to complete the proof of Theorem~\ref{th:main}.

Section~\ref{sec:further} leaves the reader with two open problems for further research.

\section{Counting Preimages}\label{sec:preimages}

\subsection*{Definitions}\strut\\
The \emph{Markov shift} associated to a countable (possibly finite) directed graph $\mathcal{G}$ with vertex set $\mathcal{V}$ is the set of all biinfinite paths on $\mathcal{G}$,
\begin{equation*}
\Sigma_{\mathcal{G}}=\left\{v\in\mathcal{V}^\mathbb{Z} \mid v_n \to v_{n+1} \text{ in }\mathcal{G}\text{ for all }n\in\mathbb{Z}\right\},
\end{equation*}
together with the shift map $(\sigma v)_n=v_{n+1}$. Its \emph{irreducible Markov subshifts} are the Markov shifts associated with the maximal strongly connected subgraphs of $\mathcal{G}$, where \emph{strongly connected} means that for each pair $(v,w)$ of vertices there is a path from $v$ to $w$. $\Sigma_{\mathcal{G}}$ becomes a measurable space when we equip it with the Borel sigma algebra, where the topology is induced from the product topology on $\mathcal{V}^\mathbb{Z}$. In the absence of compactness, the entropy of the shift is defined (following Gurevich \cite{G}) simply as the supremum of metric entropies
\begin{align*}
h(\Sigma_{\mathcal{G}}) &= 
\sup \{ h_\mu(\sigma) \,|\, \mu \text{ is a $\sigma$-invariant Borel probability measure on $\Sigma_{\mathcal{G}}$} \} \\
&= \sup \{ h_\mu(\sigma) \,|\, \mu \text{ is an ergodic $\sigma$-invariant Borel probability measure on $\Sigma_{\mathcal{G}}$} \}
\end{align*}

Given a strongly connected graph $\mathcal{G}$, the Markov shift $\Sigma_{\mathcal{G}}$ is called \emph{positive recurrent} if $\mathcal{G}$ has ``enough'' loops, so that if we fix a vertex $v$ and let $l_n$ count the number of length $n$ loops in $\mathcal{G}$ which start and end at $v$, we require that
$\limsup_{n\to\infty} l_n e^{-nh(\Sigma_{\mathcal{G}})} > 0$.
The strong connectedness of $\mathcal{G}$ guarantees that this property does not depend on the choice of the vertex $v$ (see, eg., \cite{VJ}).

\subsection*{Two shift spaces associated with an interval map}\strut\\
Let $f:[0,1]\to[0,1]$ be a transitive, piecewise monotone interval map. In particular, this implies that $f$ is surjective, piecewise strictly monotone, and has positive topological entropy. We follow the work of J. Buzzi and consider two shift spaces associated to $f$.

The first is a shift space usually called the \emph{symbolic dynamics of $f$}. It is a subshift in the alphabet $\mathcal{A}$ whose letters are the maximal open intervals on which $f$ is monotone. It is given by
\begin{equation*}
\Sigma=\{A\in\mathcal{A}^\mathbb{Z} \mid \forall{n\in\mathbb{Z}} \forall{k\geq0}\,\, A_n \cap f^{-1}(A_{n+1}) \cap \cdots \cap f^{-k}(A_{n+k})\neq\emptyset \}
\end{equation*}
together with the shift map $\sigma$. This shift space has the advantage that its alphabet is finite, but the disadvantage that it need not be a Markov shift.

The second is a Buzzi's variant of the Hofbauer shift. It is the Markov shift $\Sigmahat = \Sigma_{\mathcal{D}}$ associated to a certain directed graph $\mathcal{D}$ called the \emph{complete Markov diagram} of $\Sigma$. In the language of our original interval map, the definitions read as follows. A \emph{word} is a finite concatenation of letters from the alphabet $\mathcal{A}$. The set of points which are ``just finishing the itinerary'' given by a word is called the \emph{follower set} of the word,
\begin{equation*}
\fol{A_{-m}\cdots A_0} := f^m\left(A_{-m} \cap f^{-1}(A_{-m+1}) \cap \cdots \cap f^{-m}(A_0)\right).
\end{equation*}
By convention, the follower set of the empty word is the whole space $[0,1]$. A word is \emph{forbidden} if its follower set is empty. A \emph{constraint word} is a word $A_{-m}\cdots A_0$, $m\geq0$ whose follower set changes if we cross off the left-hand letter, i.e., such that
\begin{equation*}
\emptyset \neq \fol{A_{-m}\cdots A_0} \subsetneq \fol{A_{-m+1} \cdots A_0}
\end{equation*}
The collection of all constraint words will be denoted $\mathcal{C}$. Each nonforbidden word may be shortened to a constraint word by crossing off letters on the left, leaving behind the \emph{minimal} suffix with the same follower set. This motivates the definition
\begin{multline*}
\min(A_{-m}\cdots A_0)=A_{-k}\cdots A_0 \text{ if and only if} \\
 k\leq m \text{ and } \fol{ A_{-m}\cdots A_0} = \cdots = \fol{ A_{-k}\cdots A_0} \subsetneq \fol{ A_{-k+1}\cdots A_0}.
\end{multline*}

Finally we define the complete Markov diagram $\mathcal{D}$ as the directed graph with vertex set $\mathcal{C}$ and all arrows of the form
$
\alpha \to \min(\alpha A),
$
where $\alpha\in\mathcal{C}$, $A\in\mathcal{A}$, and $\alpha A$ is not forbidden.\footnote{In Hofbauer's original work the vertices are the follower sets, rather than the constraint words. We follow Buzzi's approach simply because his work contains the theorems we needed.}

\subsection*{Properties of these shift spaces}\strut\\
Now we can begin to exploit the connections between our transitive piecewise monotone map $f$, its symbolic dynamics $\Sigma$, and the Markov shift $\Sigmahat$ associated with its complete Markov diagram $\mathcal{D}$. We start by gathering together four known results:
\begin{itemize}
\item $h(f)=h(\Sigma)$.
\item $h(\Sigma)=h(\Sigmahat)$.
\item $\Sigmahat$ contains only finitely many positive-entropy irreducible Markov subshifts, and all of them are (strongly) positive recurrent.
\item The entropy of $\Sigmahat$ is the supremum of the entropies of its irreducible Markov subshifts.
\end{itemize}
The first result follows from Misiurewicz and Szlenk's characterization of the entropy of $f$ in terms of lap numbers, and noting that $f^n$ has as many laps as the number of length $n$ words appearing in the language of $\Sigma$ \cite{MS}. The next two results are Buzzi's, and are based on the fact that $\Sigma$, although not of finite type, is still a subshift of quasi-finite type \cite[Theorem 3 and Lemma 7]{B}. The fourth result follows because each ergodic invariant probability measure on a Markov shift is necessarily concentrated on one of its irreducible subshifts.

If we combine all four results, we may derive immediately
\begin{proposition}\label{prop:subgraph}
There exists a strongly connected subgraph $\mathcal{D}_0\subseteq\mathcal{D}$ whose associated Markov shift is positive recurrent and has the same entropy as $f$.
\end{proposition}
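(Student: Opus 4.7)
The proposition is essentially a bookkeeping combination of the four bulleted facts, so my plan is to chain them together in the order that produces the required subgraph.

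First, I would observe that since $f$ is transitive and piecewise monotone, $h(f)>0$, and by bullets (1) and (2) we have $h(\Sigmahat)=h(\Sigma)=h(f)>0$. So the Markov shift $\Sigmahat$ has strictly positive entropy.

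Next, I would use bullet (4) to write $h(\Sigmahat)=\sup_i h(\Sigma_i)$, where $\{\Sigma_i\}$ ranges over the irreducible Markov subshifts of $\Sigmahat$. The key move is to note that the subshifts of entropy zero contribute nothing to this supremum, so in fact $h(\Sigmahat)=\sup\{h(\Sigma_i):h(\Sigma_i)>0\}$. By bullet (3) this supremum is taken over a \emph{finite} set, hence is attained: there exists an irreducible Markov subshift $\Sigma_0$ of $\Sigmahat$ with $h(\Sigma_0)=h(\Sigmahat)=h(f)$. (If hypothetically the positive-entropy family were empty, then $h(\Sigmahat)$ would be $0$ or $-\infty$, contradicting the previous paragraph.)

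Finally, by the definition of ``irreducible Markov subshift,'' $\Sigma_0=\Sigma_{\mathcal{D}_0}$ for some maximal strongly connected subgraph $\mathcal{D}_0\subseteq\mathcal{D}$; and by bullet (3) this $\Sigma_0$ is positive recurrent. This $\mathcal{D}_0$ is the subgraph whose existence was claimed.

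There is no real obstacle here, only care about the trivial cases: one must rule out the possibility that all irreducible subshifts have entropy zero (handled by combining bullets (1) and (2) with the transitivity of $f$), and one must confirm that a ``sup over a finite nonempty set'' is a max, so that the desired subgraph genuinely exists rather than merely being approximated. Once these two points are in place, the proposition follows immediately from bullets (1)--(4).
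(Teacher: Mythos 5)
Your proposal is correct and follows exactly the chain of reasoning the paper intends when it says the proposition is derived ``immediately'' by combining the four bulleted facts: use $h(f)>0$ together with bullets (1) and (2) to get $h(\Sigmahat)>0$, use bullet (4) to express $h(\Sigmahat)$ as a supremum over irreducible subshifts, use bullet (3) to reduce to a finite nonempty set so the supremum is attained, and use bullet (3) again for positive recurrence of the maximizer. Nothing to add.
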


\subsection*{Follower Sets and Loops}\strut\\
Several properties of follower sets follow immediately from the definitions. For example, we can observe immediately that
\begin{itemize}
\item $\fol{A_{-m}\cdots A_0} \subseteq A_0$.
\end{itemize}
If we apply $n$ times the identity $A\cap f^{-1}(B)=f|_A^{-1}(B)$ to the definition of a follower set, we obtain
$\fol{A_{-m}\cdots A_0} = f^{m}\left[ f|_{A_{-m}}^{-1} f|_{A_{-m+1}}^{-1} \cdots f|_{A_{-1}}^{-1} A_0 \right]$. Since the monotone preimage of a connected set is connected, we can see that
\begin{itemize}
\item Each follower set is an interval.
\end{itemize}
Now consider the meaning of an arrow $\alpha\to\beta$ in $\mathcal{D}$. Write $\alpha=A_{-m}\cdots A_0$ and $\beta=\min(A_{-m}\cdots A_0 B)$. The definition of $\min(\cdot)$ gives $\fol{\beta}=\fol{A_{-m}\cdots A_0 B} = f^{m+1}\left[ A_{-m}\cap \cdots \cap f^{-m}(A_0) \cap f^{-m-1}(B) \right]$. On the other hand, the image of the follower set of $\alpha$ is $f(\fol{\alpha})=f^{m+1}\left[ A_{-m}\cap \cdots \cap f^{-m}(A_0) \right]$, which proves the implication
\begin{itemize}
\item If there is an arrow $\alpha\to\beta$, then $f(\fol{\alpha}) \supseteq \fol{\beta}$.
\end{itemize}
We also need a disjointness result. Fix a constraint word $\alpha\in\mathcal{C}$. If we form two more constraint words $\beta=\min(\alpha B)$ and $\gamma=\min(\alpha C)$ with $B\neq C$, then the disjointness of $B$ and $C$ gives disjointness of follower sets:
\begin{itemize}
\item If there are arrows $\alpha\to\beta$ and $\alpha\to\gamma$, $\beta\neq\gamma$, then $\fol{\beta}\cap\fol{\gamma}=\emptyset$.
\end{itemize}

If we combine all of these observations, we are ready to prove

\begin{lemma}\label{lem:fol-loops}
To each length-$n$ loop from a vertex $\alpha$ to itself in the complete Markov diagram, there corresponds a subinterval of $\fol{\alpha}$ which is mapped homeomorphically by $f^n$ onto $\fol{\alpha}$. Moreover, the subintervals corresponding to distinct length-$n$ loops are pairwise disjoint.
\end{lemma}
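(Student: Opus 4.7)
The plan is to pull back $\fol{\alpha}$ step-by-step along the loop, exploiting two observations established just above the lemma: along each arrow $\alpha_i \to \alpha_{i+1}$ we have $f(\fol{\alpha_i}) \supseteq \fol{\alpha_{i+1}}$, and each follower set sits inside a single monotonicity interval of $f$ (since $\fol{A_{-m}\cdots A_0} \subseteq A_0$). Together these let me pull back $\fol{\alpha_{i+1}}$ to a unique subinterval of $\fol{\alpha_i}$ via a single branch of $f^{-1}$.

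For the existence part I would write the length-$n$ loop as $\alpha = \alpha_0 \to \alpha_1 \to \cdots \to \alpha_n = \alpha$ and define intervals $I_n, I_{n-1}, \ldots, I_0$ by backward induction: set $I_n := \fol{\alpha}$, and given $I_{i+1} \subseteq \fol{\alpha_{i+1}}$, take $I_i := (f|_{\fol{\alpha_i}})^{-1}(I_{i+1})$. This is well-defined because $f|_{\fol{\alpha_i}}$ is a homeomorphism onto its image (its domain lies in a monotonicity interval of $f$), and because that image contains $\fol{\alpha_{i+1}} \supseteq I_{i+1}$. The interval $J := I_0$ is then a subinterval of $\fol{\alpha}$ on which $f^n$ factors as a composition of homeomorphisms $I_0 \to I_1 \to \cdots \to I_n = \fol{\alpha}$, and is therefore itself a homeomorphism onto $\fol{\alpha}$.

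For the disjointness claim I would take two distinct length-$n$ loops $(\alpha_i)$ and $(\alpha_i')$ at $\alpha$ and let $i$ be the smallest index where they differ. Then $\alpha_i = \alpha_i'$ while $\alpha_{i+1} \neq \alpha_{i+1}'$ are both reached by arrows from $\alpha_i$, so the disjointness observation yields $\fol{\alpha_{i+1}} \cap \fol{\alpha_{i+1}'} = \emptyset$, and in particular $I_{i+1}$ and $I_{i+1}'$ are disjoint. I would then propagate this downward by backward induction on $j = i, i-1, \ldots, 0$: at each such step, $I_j$ and $I_j'$ both lie in the common follower set $\fol{\alpha_j} = \fol{\alpha_j'}$, which sits in a monotonicity interval of $f$, so $f|_{\fol{\alpha_j}}$ is injective, and $f(I_j) \cap f(I_j') = I_{j+1} \cap I_{j+1}' = \emptyset$ forces $I_j \cap I_j' = \emptyset$. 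Setting $j = 0$ yields the desired disjointness.

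I do not expect a serious obstacle: the argument is essentially a mechanical assembly of the four bulleted properties of follower sets listed just before the lemma. The one point warranting a little care is the verification that each pullback $I_i$ is itself an interval, which reduces to the fact that $f$ restricted to the monotonicity interval containing $\fol{\alpha_i}$ is a strictly monotone homeomorphism onto its image, so that preimages of intervals are intervals.
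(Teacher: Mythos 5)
Your proof is correct and takes essentially the same route as the paper, which builds the same pullback interval in one stroke as $I = f|_{\fol{\alpha_0}}^{-1} f|_{\fol{\alpha_1}}^{-1} \cdots f|_{\fol{\alpha_{n-1}}}^{-1} \fol{\alpha_n}$ (your $I_0$) and, for disjointness, compares forward images $f^k(I)\subseteq\fol{\alpha_k}$ and $f^k(J)\subseteq\fol{\beta_k}$ directly rather than peeling back one step at a time by injectivity. One cosmetic slip: you write ``let $i$ be the smallest index where they differ'' but then assert $\alpha_i=\alpha_i'$ and $\alpha_{i+1}\neq\alpha_{i+1}'$; you mean that $i+1$ is the first index at which the loops differ (equivalently, $i$ is the last index at which they agree).
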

\begin{proof}
Let $\alpha=\alpha_0\to\alpha_1\to\cdots\to\alpha_n=\alpha$ be a loop. Put $I=\fol{\alpha_0}\cap f^{-1}\fol{\alpha_1}\cap \cdots \cap f^{-n}\fol{\alpha_n}$. Applying $n$ times the identity $A\cap f^{-1}(B)=f|_A^{-1}(B)$, this becomes
\begin{equation*}
I=f|_{\fol{\alpha_0}}^{-1} f|_{\fol{\alpha_1}}^{-1} \cdots f|_{\fol{\alpha_{n-1}}}^{-1} \fol{\alpha_n}.
\end{equation*}
Since the image of each follower set in the loop contains the next one, and since $f$ restricted to each follower set is monotone, we see that $f^n$ maps the interval $I$ monotonically and surjectively (i.e., homeomorphically) onto $\fol{\alpha_n}$.

Now let $\alpha=\beta_0\to\beta_1\to\cdots\to\beta_{n-1}\to\beta_n=\alpha$ be another length-$n$ loop which starts and ends at the same vertex $\alpha$, and let $J$ be the corresponding subinterval. Let $k$ be the minimum index so that $\alpha_k\neq\beta_k$. By hypothesis, $1\leq k\leq n-1$. Thus the two arrows $\alpha_{k-1}\to\alpha_k$ and $\beta_{k-1}\to\beta_k$ originate from the same vertex, so that $\fol{\alpha_k}\cap\fol{\beta_k}=\emptyset$. But $f^k(I)\subseteq\fol{\alpha_k}$ and $f^k(J)\subseteq\fol{\beta_k}$. Therefore $I\cap J=\emptyset$ also.
\end{proof}

\subsection*{Counting Preimages}\strut\\
We now have all the tools we need to prove Theorem~\ref{th:preimages}, which we restate here for the reader's convenience.

\begin{thm1}
Let $f\in\T$ and fix $x\in[0,1]$. Then
\begin{equation*}
\limsup_{n\to\infty} \frac{\# f^{-n}(x)}{e^{n h(f)}} > 0.
\end{equation*}
\end{thm1}
\begin{proof}
Let $\mathcal{D}$ be the complete Markov diagram of $f$ and $\mathcal{D}_0$ the subgraph promised by Proposition~\ref{prop:subgraph}. Fix a vertex $\alpha=A_{-m}\cdots A_0 \in\mathcal{D}_0$. Under a transitive piecewise monotone interval map, every point has a dense set of preimages \cite[see Theorem 2.19 and Proposition 2.34]{Ru}. Therefore, we can find a natural number $n_0$ such that $f^{-n_0}(x)\cap\langle A_{-m}\cdots \underline{A_0}\rangle\neq\emptyset$. Let $l_n$ be the number of loops of length $n$ in $\mathcal{D}_0$ which start and end at $\alpha$. By Lemma~\ref{lem:fol-loops} we get $\#f^{-n-n_0}(x)\geq l_n$, so using positive recurrence we get
\begin{equation*}
\limsup_{n\to\infty} \frac{\# f^{-n}(x)}{e^{nh(f)}} \geq 
\limsup_{n\to\infty} \frac{l_{n-n_0}}{e^{nh(\Sigma_{\mathcal{D}_0})}} > 0. \qedhere
\end{equation*}
\end{proof}

\section{Equicontinuity}\label{sec:equicontinuous}

To prove our equicontinuity result, we need to understand the behavior of critical points under perturbation in the space $\T_m$. We will use the following notation for the $\zeta$-neighborhood of a point $f$ in $\T_m$,
\begin{equation*}
N_\zeta(f):=\{g\in\T_m \mid d(f,g)<\zeta\}.
\end{equation*}

The following lemma records the simple observation that critical points vary continuously under perturbation in $\T_m$.

\begin{lemma}\label{lem:perturbcrit}
\emph{``Critical points vary continuously''}\\
Fix $f\in\T_m$ and $\rho>0$. Then there exists $\zeta>0$ such that if $g\in N_\zeta(f)$, then there is a bijection $\Crit(f)\to\Crit(g)$, $c\mapsto c'$, such that $0'=0$, $1'=1$, and $|c-c'|<\rho$ for all $c\in\Crit(f)$.\end{lemma}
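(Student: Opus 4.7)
My plan is to exploit the fact that each interior critical point of $f$ is a strict local extremum, so that a sufficiently small $C^0$-perturbation $g$ must have a local extremum (hence a critical point) near each critical point of $f$; the modality constraint $g \in \T_m$ will then force these neighborhoods to capture \emph{all} critical points of $g$, yielding the bijection.

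Concretely, I would list the interior critical points of $f$ as $0 = c_0 < c_1 < \cdots < c_m < c_{m+1} = 1$ and, shrinking $\rho$ if necessary, arrange that the closed intervals $[c_i - \rho, c_i + \rho]$ for $i = 1, \ldots, m$ are pairwise disjoint and contained in $(0,1)$. Since $f$ is piecewise strictly monotone with critical set $\{c_0, \ldots, c_{m+1}\}$, each interior $c_i$ is a strict local extremum of $f$. In the local-maximum case we have $f(c_i) > f(c_i - \rho)$ and $f(c_i) > f(c_i + \rho)$; let $\eta_i$ be one third of the smaller of the two gaps, define $\eta_i$ symmetrically in the local-minimum case, and set $\zeta := \min_i \eta_i > 0$.

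Next I would verify that this $\zeta$ works. If $g \in N_\zeta(f)$ and $c_i$ is a local maximum of $f$, then $g(c_i) - g(c_i \pm \rho) \geq f(c_i) - f(c_i \pm \rho) - 2\zeta > 0$, so $g$ attains its maximum on the compact interval $[c_i - \rho, c_i + \rho]$ at some point $c_i'$ lying in the open interior $(c_i - \rho, c_i + \rho)$. Being a local extremum in the interior of an interval, $c_i'$ is not monotone in any neighborhood of itself, and therefore $c_i' \in \Crit(g)$. The local-minimum case is symmetric, producing $m$ distinct points of $\Crit(g) \cap (0,1)$, one in each of the $m$ pairwise disjoint neighborhoods $(c_i - \rho, c_i + \rho)$.

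The proof then concludes by pigeonhole: since $g \in \T_m$ has modality exactly $m$, the $m$ points $c_1', \ldots, c_m'$ exhaust $\Crit(g) \cap (0,1)$, so the assignment $c_i \mapsto c_i'$ together with the forced $0 \mapsto 0$ and $1 \mapsto 1$ is a bijection $\Crit(f) \to \Crit(g)$ with $|c_i - c_i'| < \rho$. There is no serious obstacle; the argument is essentially just the persistence of strict local extrema under uniform perturbation, and the only point to watch is that the fixed-modality hypothesis is what upgrades ``at least one nearby critical point'' to ``exactly one.''
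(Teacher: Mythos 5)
Your argument is essentially identical to the paper's: both proofs observe that each interior critical point of $f$ is a strict local extremum (since transitive piecewise monotone maps are piecewise strictly monotone), choose $\zeta$ small enough that the values at $c_i\pm\rho$ stay on one side of the value at $c_i$ after perturbation, deduce the existence of a critical point of $g$ in each of $m$ disjoint $\rho$-neighborhoods, and finally invoke the fixed modality to upgrade injectivity to bijectivity. The only differences are cosmetic (you take one-third of the gap where the paper takes one-half, and you shrink $\rho$ where the paper introduces $\rho'<\rho$).
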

\begin{proof}
Choose $\rho'<\rho$ less than half the distance between any two adjacent points in $\Crit(f)$. Now choose $\zeta>0$ sufficiently small so that $\zeta<\frac12|f(c)-f(w)|$ for $c\in \Crit(f)\setminus\{0,1\}$, $w=c\pm\rho'$.

Suppose $g\in N_\zeta(f)$.  We construct the bijection $\Crit(f)\to\Crit(g)$. Put $0'=0$ and $1'=1$. Now let $c\in\Crit(f)\setminus\{0,1\}$. By the choice of $\zeta$, the values $g(c-\rho')$, $g(c+\rho')$ both lie on the same side of $g(c)$. Therefore $g$ has a critical point $c'$ satisfying $|c-c'|<\rho'<\rho$. The mapping so defined is injective by the choice of $\rho'$. Since $f,g$ have the same modality, it is also surjective.
\end{proof}

Now we give a known property of interval maps, as well as an upgraded version for use with perturbation. They concern the accessibility of endpoints of the interval from the interior of the interval.

\begin{lemma}\label{lem:onto}
\emph{ \cite[Lemma 15]{M2} ``Accessibility of endpoints''}\\
For all $f\in\T_m$, $f^2((0,1))=[0,1].$
\end{lemma}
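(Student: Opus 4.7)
My plan is to prove the lemma in three steps: show $f((0,1)) \supseteq (0,1)$ by a connectedness argument, deduce $(0,1) \subseteq f^2((0,1))$ immediately from this, and finally handle the endpoints $0$ and $1$ separately using transitivity in the form of density of preimages.

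For the interior image, $f((0,1))$ is the continuous image of a connected set, hence a subinterval of $[0,1]$. Since $f$ is transitive it is surjective, so $f((0,1))$ equals $f([0,1]) = [0,1]$ minus at most the two points $f(0)$ and $f(1)$. The only dense subintervals of $[0,1]$ are $(0,1), [0,1), (0,1], [0,1]$, so $f((0,1)) \supseteq (0,1)$. Applying $f$ once more, $f^2((0,1)) \supseteq f((0,1)) \supseteq (0,1)$.

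For $0 \in f^2((0,1))$ I would argue by contradiction; the case of $1$ is symmetric. Suppose $f^{-2}(0) \cap (0,1) = \emptyset$. Transitivity (invoked as in the proof of Theorem~\ref{th:preimages}) gives that $\bigcup_{n\geq 1} f^{-n}(0)$ is dense in $[0,1]$, so there is a smallest $n_0 \geq 1$ with $f^{-n_0}(0) \cap (0,1) \neq \emptyset$. If $n_0 = 1$, fix $x_0 \in (0,1) \cap f^{-1}(0)$; since $x_0 \in (0,1) \subseteq f((0,1))$ there is $y \in (0,1)$ with $f(y) = x_0$, so $y \in f^{-2}(0) \cap (0,1)$, a contradiction. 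The case $n_0 = 2$ contradicts the assumption directly. If $n_0 \geq 3$, pick such an $x_0$ and let $y_j = f^j(x_0)$; for $1 \leq j \leq n_0 - 1$ we have $y_j \in f^{-(n_0-j)}(0)$ with $1 \leq n_0 - j < n_0$, so minimality forces $y_j \notin (0,1)$, and also $y_j \neq 0$ (else $n_0$ was not minimal). Hence $y_j = 1$ throughout, which gives $f(1) = y_2 = 1$ and therefore $y_{n_0} = 1 \neq 0$, a contradiction.

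The main obstacle is the endpoint step: one must observe that the assumption $f^{-2}(0) \cap (0,1) = \emptyset$ propagates backward along any orbit reaching $0$, forcing the intermediate values to collapse onto the single endpoint $1$ and producing an algebraic contradiction. Combining the three inclusions yields $f^2((0,1)) = [0,1]$.
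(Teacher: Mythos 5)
The paper does not supply its own proof of this lemma; it is cited as \cite[Lemma~15]{M2}. So there is no in-paper argument to compare against, and I will instead assess your proof on its own merits: it is correct, and it is a clean self-contained reconstruction.

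Your three-step plan works. For the interior, $f((0,1))$ is an interval (continuous image of a connected set), and surjectivity of the transitive map gives $f((0,1))\supseteq [0,1]\setminus\{f(0),f(1)\}$, a dense subset of $[0,1]$; a dense subinterval of $[0,1]$ necessarily contains $(0,1)$. The inclusion $f^2((0,1))\supseteq(0,1)$ then follows by applying $f$ once more to $f((0,1))\supseteq(0,1)$. For the endpoint $0$, your minimality argument is airtight: with $n_0$ minimal such that $f^{-n_0}(0)$ meets $(0,1)$, the cases $n_0=1$ (use the interior inclusion to go back one more step) and $n_0=2$ (direct contradiction) are immediate, and for $n_0\ge3$ minimality forces each intermediate iterate $y_j=f^j(x_0)$, $1\le j\le n_0-1$, into $\{0,1\}$, while the exclusion $y_j\neq 0$ (again by minimality) pins them all at $1$; since $n_0\ge 3$ you see at least two consecutive ones, so $f(1)=1$, and then $y_{n_0}=1\neq 0$, a contradiction. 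The case for $1$ is symmetric. The only external input is density of iterated preimages, which the paper itself invokes from Ruette's book for transitive piecewise monotone maps, so that appeal is legitimate here.
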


\begin{lemma}\label{lem:ontopert}
\emph{``Equi-accessibility of endpoints''}\\
For all $f\in\T_m$ there exist $\rho,\zeta>0$ such that for all $g\in N_\zeta(f)$, $g^2([\rho,1-\rho])=[0,1].$
\end{lemma}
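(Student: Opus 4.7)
The plan is a contradiction argument using Lemma~\ref{lem:onto}, applied both to $f$ and to each perturbation $g\in N_\zeta(f)$, together with upper semicontinuity of preimage sets under uniform convergence and the critical-point bookkeeping of Lemma~\ref{lem:perturbcrit}.

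Suppose no valid $\rho,\zeta$ exist. Then I can extract a sequence $g_n\to f$ in $\T_m$ with $g_n^2([1/n,1-1/n])\ne[0,1]$ for each $n$. Since $g_n^2([1/n,1-1/n])$ is a closed subinterval of $[0,1]$, it must miss $0$ or $1$; passing to a subsequence, assume it misses $0$. Lemma~\ref{lem:onto} applied to $g_n$ produces a point $p_n\in(g_n^2)^{-1}(0)\cap(0,1)$, which by hypothesis must lie in $(0,1/n)\cup(1-1/n,1)$. A further subsequence gives $p_n\to e\in\{0,1\}$; WLOG $e=0$. Uniform convergence $g_n^2\to f^2$ and continuity of $f^2$ then force $f^2(0)=0$.

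Writing $\alpha:=f(0)$, we have $f(\alpha)=0$, and I case-analyze on $\alpha$: interior critical point of $f$, or the endpoint $0$, or the endpoint $1$. In each case, Lemma~\ref{lem:perturbcrit} keeps the critical points of $g_n$ close to those of $f$ and hence uniformly inside a compact subinterval of $(0,1)$; transitivity of $g_n$ forces $\min g_n=0$ and $\max g_n=1$ to be attained exactly; and Lemma~\ref{lem:onto} applied to $f$ itself guarantees an interior preimage of $0$ under $f^2$ other than the endpoint. Combining these, I expect to show that for large $n$ the set $(g_n^2)^{-1}(0)$ contains a point in some fixed compact subinterval $[\rho_0,1-\rho_0]\subset(0,1)$ independent of $n$; but the failure hypothesis places every interior preimage of $0$ in $(0,1/n)\cup(1-1/n,1)$, which is disjoint from $[\rho_0,1-\rho_0]$ for large $n$ --- a contradiction.

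The main obstacle is the case $\alpha\in(0,1)$: here $\alpha$ is an interior critical point of $f$ with $f(\alpha)=0$, and the perturbed critical value $g_n(\alpha_n')$ is close to $0$ but need not equal $0$, so the natural preimage of $0$ under $g_n^2$ via $\alpha_n'$ is only approximate. The fix I envision is to use transitivity of $g_n$ to locate \emph{some} critical point or endpoint at which $g_n$ attains $0$ exactly --- a point whose existence is forced by $\min g_n=0$ and whose location is constrained by the fact that the candidate values $g_n(0)$, $g_n(1)$, $g_n(c_1'),\ldots,g_n(c_m')$ are close to those of $f$ --- and then invoke the intermediate value theorem on the appropriate monotone lap of $g_n$ to produce an interior preimage of this point under $g_n$ bounded away from $\{0,1\}$ via Lemma~\ref{lem:perturbcrit}. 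An entirely symmetric argument handles the case where $1$ (rather than $0$) is the value missing from $g_n^2([1/n,1-1/n])$.
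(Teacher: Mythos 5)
Your outline (contradiction via compactness, extract $g_n\to f$, derive $f^2(0)=0$, case-analyze on $\alpha=f(0)$) has the right intuition but is not a proof, and the gap sits exactly where you flag it: the case $\alpha\in(0,1)$, and more broadly the final step of producing a point in $(g_n^2)^{-1}(0)\cap[\rho_0,1-\rho_0]$. After you locate $w_n$ with $g_n(w_n)=0$ --- which must be an endpoint or an interior critical point near a zero of $f$ --- your plan is to ``invoke the intermediate value theorem on the appropriate monotone lap of $g_n$ to produce an interior preimage of $w_n$ bounded away from $\{0,1\}$.'' But nothing in Lemma~\ref{lem:perturbcrit} guarantees that such a preimage exists on a lap whose endpoints are \emph{interior} critical points: it is entirely possible that the only lap of $g_n$ whose image contains $w_n$ is a boundary lap $[0,c_1']$ or $[c_{m}', 1]$, and then the IVT hands you a preimage that may sit arbitrarily close to $0$ or $1$. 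In fact, producing a preimage of $0$ under $g_n^2$ that is \emph{uniformly} bounded away from the endpoints is precisely the content of the lemma you are trying to prove, so the compactness reduction does not actually shrink the task --- you re-encounter it one iterate later, with no new leverage. (There is also a smaller slip: you apply ``WLOG misses $0$'' and then independently ``WLOG $e=0$''; these two reductions are not jointly available under the single symmetry $x\mapsto 1-x$, so you would need to handle two of the four combinations.)

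The paper's proof is constructive and avoids this difficulty by a different bookkeeping. It chooses $\rho$ so that (i) the only critical point of $g$ in $N_\rho(a)$ is $a$ itself for $a\in\{0,1\}$, and (ii) $g$ has at most one critical point in $g(N_\rho(a))$. Then it takes, from Lemma~\ref{lem:onto} applied directly to $g$, interior points $x,y$ with $g^2(x)=0$, $g^2(y)=1$, observes that $g(x),g(y)\in\Crit(g)$, and shows via (i) that \emph{at least one of the two points} $x, g(x)$ lies in $[\rho,1-\rho]$ (ditto for $y,g(y)$): if $g(x)\notin[\rho,1-\rho]$ then $g(x)\in\{0,1\}$, forcing $x\in\Crit(g)$, and (i) then pins $x$ into $[\rho,1-\rho]$. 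The remaining four-way case analysis (with a further IVT step in Case 3b using (ii) to keep the new point out of $N_\rho(1)$) closes the argument. The crucial idea your proposal is missing is precisely this two-step tracking of both $x$ and $g(x)$ together with the dichotomy ``near an endpoint $\Rightarrow$ equals the endpoint $\Rightarrow$ its $g$-preimage point is critical and hence interior-localized.'' Without that, the ``bounded away from $\{0,1\}$'' conclusion is unsupported.
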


\begin{proof}
We use the notation of ``$\rho$-neighborhoods'' for points $a$ and sets $A$, 
\begin{gather*}
N_\rho(a):=\{x\in[0,1] \mid |x-a|<\rho\} \\
N_\rho(A):=\{x\in[0,1] \mid |x-a|<\rho \text{ for some } a\in A\}.
\end{gather*}
Choose $\rho>0$ small enough that 
\begin{align*}
N_\rho(a)\cap N_\rho(\Crit(f)\setminus\{a\})&=\emptyset, \text{ and}\\
N_\rho(f(N_\rho(a))) \cap N_\rho(\Crit(f)\setminus\{f(a)\})&=\emptyset, \text{  for }a=0,1.
\end{align*}
Find $\zeta<\rho$ corresponding to $f$ and $\rho$ in Lemma~\ref{lem:perturbcrit}. Fix $g\in N_\zeta(f)$. Then for both $a=0,1$,
\begin{enumerate}[label=\textnormal{(\roman*)}]
\item\label{it:one} The only critical point of $g$ in $N_\rho(a)$ is $a$, and
\item\label{it:two} $g$ has at most one critical point in $g(N_\rho(a))$ (i.e. $f(a)'$).
\end{enumerate}
Lemma~\ref{lem:onto} applied to $g$ gives a pair of points $x,y\not\in\{0,1\}$ with
\begin{equation*}
x\mapsto g(x)\mapsto g^2(x)=0 \quad \text{and} \quad y\mapsto g(y)\mapsto g^2(y)=1.
\end{equation*}
Any point which maps into an endpoint of the interval $[0,1]$ must be critical. Therefore $g(x),g(y)\in\Crit(g)$. We claim that at least one of the points $x,g(x)$ belongs to $[\rho,1-\rho]$. Indeed, if $g(x)$ does not, then~\ref{it:one} implies $g(x)\in\{0,1\}$. Then $x\in\Crit(g)$, and~\ref{it:one} implies $x\in[\rho,1-\rho]$. The same argument shows that at least one of the points $y,g(y)$ belongs to $[\rho,1-\rho]$.

We finish the proof in cases.\\
\emph{Case 1:} $x,y\in[\rho,1-\rho]$. Then $g^2([\rho,1-\rho])=[0,1]$.\\
\emph{Case 2:} $g(x),g(y)\in[\rho,1-\rho]$. Then $g([\rho,1-\rho])=[0,1]$, so also $g^2([\rho,1-\rho])=[0,1]$.\\
\emph{Case 3:} $x,g(y)\in[\rho,1-\rho]$, $g(x),y\not\in[\rho,1-\rho]$. Then~\ref{it:one} implies $g(x)\in\{0,1\}$.
\begin{enumerate}[label=,leftmargin=0.5cm,topsep=0pt,partopsep=0pt]
\item \emph{Case 3a:} $g(x)=0$. Then $g([\rho,1-\rho])=[0,1]$, so also $g^2([\rho,1-\rho])=[0,1]$.
\item \emph{Case 3b:} $g(x)=1$. By the intermediate value theorem there is a point $z\in(x,1)$ with $g(z)=x$. Since $1,z$ both map to critical points,~\ref{it:two} implies $z\not\in N_\rho(1)$. So $g^2([\rho,1-\rho])$ contains both $0=g^2(x)$ and $1=g^2(z)$.
\end{enumerate}
\emph{Case 4:} $x,g(y)\notin[\rho,1-\rho]$, $g(x),y\in[\rho,1-\rho]$. Analogous to Case 3.
\end{proof}

The last ingredient we need is a kind of uniform locally eventually onto property. Unfortunately, it holds only in the subspace $\M_m\subset \T_m$ of maps which are topologically weak mixing. We remind the reader that $f$ is called (topologically) \emph{weak mixing} if for any nonempty open sets $A,B,C,D$ there exists $n\geq0$ such that $f^n(A)\cap B\neq\emptyset$ and $f^n(C)\cap D\neq\emptyset$ simultaneously. The next two lemmas say that a piecewise monotone weak mixing interval map $f$ is uniformly locally eventually onto, and the nearby maps in $\T_m$ are even equi-uniformly locally eventually onto. We can formulate analogous results when $f$ is transitive but not weak mixing, but the statements and proofs are technically involved and offer little insight, and are therefore deferred to the appendix.

\begin{lemma}\label{lem:unileo} 
\emph{\cite[Lemma 2.28]{Ru} ``Uniformly locally eventually onto''}\\
For all $f\in\M_m$ and all $\epsilon>0$ there exists $k\in\mathbb{N}$ such that for all $x,y\in[0,1]$, $$y-x>\epsilon \implies f^k([x,y])=[0,1].$$
\end{lemma}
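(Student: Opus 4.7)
My plan is to split the proof into two steps: first establish that every $f\in\M_m$ is locally eventually onto (leo), meaning that for each nonempty open $U\subseteq[0,1]$ there is some $n$ with $f^n(U)=[0,1]$; then uniformize the $n$ over all intervals of length $>\epsilon$ using a simple compactness argument. The second step is the routine part; the first step is where the weak-mixing hypothesis actually gets used.

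For the leo step, the idea is standard: since $f$ is weak mixing and piecewise monotone, $f^n(U)$ is a finite union of intervals whose total length is bounded below, and weak mixing forces these unions to eventually exhaust $[0,1]$. In fact, a theorem of Barge–Martin says weak mixing implies topological mixing for interval maps, and together with the fact that the complement of $\bigcup_{n\ge N} f^n(U)$ would be a non-trivial closed forward-invariant set (contradicting transitivity), one concludes $f^n(U)=[0,1]$ for all sufficiently large $n$. This is exactly the statement of \cite[Lemma 2.28]{Ru}, so in the paper I would just cite it and keep the proof focused on the uniformization.

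For the uniformization step, choose finitely many points $z_1,\ldots,z_N\in[0,1]$ whose $(\epsilon/4)$-neighborhoods cover $[0,1]$, and put $J_i=(z_i-\epsilon/4,z_i+\epsilon/4)\cap[0,1]$. By leo applied to each $J_i$, there exists $k_i\in\mathbb{N}$ with $f^{k_i}(J_i)=[0,1]$. Let $k=\max_i k_i$. Since $f$ is surjective, $f^{k-k_i}([0,1])=[0,1]$, hence $f^k(J_i)=[0,1]$ for every $i$. Now given $[x,y]$ with $y-x>\epsilon$, the subinterval $[x+\epsilon/4,y-\epsilon/4]$ has length $>\epsilon/2$, so it must contain some $z_i$; for that $i$ we have $J_i\subseteq[x,y]$, whence
\begin{equation*}
f^k([x,y])\supseteq f^k(J_i)=[0,1],
\end{equation*}
and the reverse inclusion is trivial.

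The main obstacle, if one wanted a fully self-contained proof, would be the leo step — but since it is a standard fact for weak-mixing piecewise monotone interval maps and already recorded as \cite[Lemma 2.28]{Ru}, the proof in this paper can reasonably be limited to the short compactness argument above.
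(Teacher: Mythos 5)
The paper offers no proof of this lemma: it is stated as a direct citation, with the uniform statement itself attributed to \cite[Lemma 2.28]{Ru}, and the text then moves straight on to Lemma~\ref{lem:equiunileo}. Your proposal, by contrast, re-derives the uniform version from the non-uniform locally-eventually-onto (leo) property by a compactness argument. That second step is correct and self-contained: cover $[0,1]$ by $\epsilon/4$-neighborhoods $J_i$, apply leo to get $k_i$ with $f^{k_i}(J_i)=[0,1]$, set $k=\max_i k_i$, use surjectivity of $f$ (which holds for every $f\in\T$) to upgrade $f^{k_i}(J_i)=[0,1]$ to $f^{k}(J_i)=[0,1]$, and observe that any interval of length $>\epsilon$ contains some $J_i$ because the midpoint of $[x+\epsilon/4,y-\epsilon/4]$ is within $\epsilon/4$ of some $z_i$.

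Two caveats on the leo step, however. First, the paper's lemma heading explicitly attaches the citation to the \emph{uniform} statement, so invoking \cite[Lemma 2.28]{Ru} for the non-uniform claim and then uniformizing it yourself is either circular or a mis-attribution; if Ruette's Lemma 2.28 already gives the uniform version, your compactness step is superfluous, and if it only gives the non-uniform version, you should not describe the non-uniform leo as ``exactly the statement of [Ru, Lemma 2.28].'' Second, your sketch of why weak mixing forces leo is not right as written: Barge--Martin is not the standard reference for weak-mixing $\Rightarrow$ mixing on the interval (the paper's appendix uses Ruette Prop.~2.34 and Thm.~2.19 for the equivalence of weak mixing, strong mixing, and exactness for piecewise monotone maps), and the set $\bigcup_{n\ge N}f^n(U)$ need not have a closed, forward-invariant complement --- the images $f^n(U)$ are not open in general for piecewise monotone $f$, and since $f$ is not injective, $x\notin f^{n-1}(U)$ does not prevent $f(x)\in f^n(U)$. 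These flaws do not sink the argument because you ultimately defer to Ruette, but the justification as written should be repaired or simply deleted in favor of the citation, as the paper does.
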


\begin{lemma}\label{lem:equiunileo}
\emph{``Equi-uniformly locally eventually onto.''}\\
For all $f\in\M_m$ and all $\epsilon>0$ there exist $k\in\mathbb{N}$ and $\eta>0$ such that for all $g\in N_\eta(f)$ and all $x,y\in[0,1]$, $$y-x>\epsilon \implies g^{k+2}([x,y])=[0,1].$$
\end{lemma}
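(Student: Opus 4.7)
The plan is to combine Lemma~\ref{lem:unileo} applied to $f$ itself with the equi-accessibility statement of Lemma~\ref{lem:ontopert} applied to perturbations $g$, using continuity of iteration in $C^0$ to bridge them. First I would invoke Lemma~\ref{lem:ontopert} for $f$ to produce $\rho, \zeta_1 > 0$ such that $g^2([\rho, 1-\rho]) = [0,1]$ for all $g \in N_{\zeta_1}(f)$; shrinking $\rho$ if necessary, I may assume $\rho < \tfrac12$. Next I would apply Lemma~\ref{lem:unileo} to $f$ with the given $\epsilon$ to obtain $k \in \mathbb{N}$ such that $f^k([x,y]) = [0,1]$ for every interval with $y - x > \epsilon$.

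The key step is a continuity-of-iteration estimate: because $f$ is uniformly continuous on $[0,1]$, a routine induction on $k$ shows that $g \mapsto g^k$ is continuous at $f$ in the $C^0$ metric, so I can pick $\zeta_2 > 0$ with $d(g,f) < \zeta_2 \Longrightarrow d(g^k, f^k) < \rho$. Setting $\eta = \min(\zeta_1, \zeta_2)$, I fix any $g \in N_\eta(f)$ together with $[x,y]$ satisfying $y - x > \epsilon$. Then $f^k([x,y]) = [0,1]$ yields points $a, b \in [x,y]$ with $f^k(a) = 0$ and $f^k(b) = 1$, whence $g^k(a) < \rho$ and $g^k(b) > 1 - \rho$. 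Since $g^k([x,y])$ is a connected subset of $[0,1]$ containing both of these values (which lie on opposite sides of $\tfrac12$), it must contain the whole interval $[\rho, 1-\rho]$. Applying $g^2$ and using the choice of $\zeta_1$ gives
\begin{equation*}
g^{k+2}([x,y]) \supseteq g^2([\rho, 1-\rho]) = [0,1],
\end{equation*}
as desired.

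The main obstacle is conceptual rather than technical: one has to notice that although perturbation can easily destroy the surjectivity $f^k([x,y]) = [0,1]$ at the endpoints of the codomain, it cannot destroy the weaker containment $g^k([x,y]) \supseteq [\rho, 1-\rho]$, and the ``lost'' neighborhoods of $0$ and $1$ are exactly what Lemma~\ref{lem:ontopert} recovers uniformly at the cost of two additional iterates. No weak-mixing property of the perturbation $g$ is needed, which is why the argument still works even when the perturbations are merely transitive.
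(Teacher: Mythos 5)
Your proposal is correct and follows essentially the same route as the paper's proof: choose $k$ from Lemma~\ref{lem:unileo}, choose $\rho,\zeta$ from Lemma~\ref{lem:ontopert}, shrink $\eta$ via continuity of iteration so that $d(g^k,f^k)<\rho$, and then conclude via $g^{k+2}([x,y])\supseteq g^2([\rho,1-\rho])=[0,1]$. You simply spell out the containment $g^k([x,y])\supseteq[\rho,1-\rho]$ a bit more explicitly with intermediate points $a,b$; this is the same argument.
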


\begin{proof}
Let $f,\epsilon$ be given. Choose $k$ as in Lemma~\ref{lem:unileo} and $\rho,\zeta$ as in Lemma~\ref{lem:ontopert}. Finally, choose $\eta<\zeta$ small enough that $d(f,g)<\eta$ implies $d(f^k,g^k)<\rho$. For each $g\in N_\eta(f)$ and $x,y\in[0,1]$ with $y-x>\epsilon$, we get $$g^{k+2}([x,y]) = g^2(g^k([x,y])) \supseteq g^2([\rho,1-\rho]) = [0,1].$$
The containment follows because $f^k([x,y])=[0,1]$ and $g^k$ is $\rho$-close to $f^k$. The final equality is just Lemma~\ref{lem:ontopert}.
\end{proof}

Now we are ready to formulate and prove our equicontinuity result.

\begin{thm2}
If $K$ is a compact subset of $\T_m$, then $\Psi_m(K)$ is an equicontinuous family.
\end{thm2}

\begin{proof}
Topological entropy on the space of piecewise monotone maps of modality $m$ is not continuous, but jumps of entropy are bounded in the sense that $\limsup_{g\rightrightarrows f} h(g) \leq \max\{ h(f), \log 2 \}$, see \cite[Theorem 1]{M}. Thus each point $f\in K$ has a neighborhood on which $h$ is bounded above. By compactness, this implies that $h$ is bounded above on $K$, so put $\lambda = \sup \{ \exp h(f) \mid f\in K\}$. This $\lambda$ is a common Lipschitz constant for the constant slope models of all the maps in $K$.\footnote{Incidentally, this shows that $\Phi_m(K)$ is also an equicontinuous family. We remark, however, that the set of inverse homeomorphisms $\{\psi^{-1} | \psi\in\Psi(K)\}$ is not necessarily an equicontinuous family. That is why we defined the operator $\Psi$ in the direction we did, imagining a map's constant slope model as an extension rather than as a factor.}

Assume now that $K\subset \mathcal{M}_m$. The general case $K\subset \mathcal{T}_m$ is addressed in the appendix.

Fix $\epsilon>0$. The neighborhoods around each $f\in K$ guaranteed by Lemma~\ref{lem:equiunileo} form an open cover of $K$. Pass to a finite subcover and let $k_0$ be the maximum of the corresponding values of $k$. Fix $g\in K$. It belongs to one of those neighborhoods, so choosing $x,y\in[0,1]$ with $y-x>\epsilon$ we have 
$$g^{k_0+2}([x,y])=[0,1].$$
Write $\tilde{g}=\Phi_m(g)$ for the constant slope model and $\psi=\Psi_m(g)$ for the conjugating homeomorphism. Now we pass through the conjugacy $g=\psi \circ \tilde{g} \circ \psi^{-1}$ to obtain
$$\tilde{g}^{k_0+2}([\psi^{-1}x, \psi^{-1}y])=[0,1].$$
But $\tilde{g}$ has Lipschitz constant $\lambda$, so an interval which it stretches to length $1$ in $k_0+2$ steps must have length at least $\lambda^{-k_0-2}$. Writing $\delta=\lambda^{-k_0-2}$ we have proved that
$$ \forall \epsilon>0\,\, \exists \delta>0\,\, \forall \psi\in\Psi_m(K)\,\, \forall x,y\in[0,1]\, :\, y-x>\epsilon \implies \psi^{-1} y - \psi^{-1} x > \delta.$$
But this says exactly that the family $\Psi_m(K)$ is equicontinuous.
\end{proof}

\begin{corollary}\label{cor:equicontinuous}
Suppose $g_n \rightrightarrows f$ in $\T_m$. Then the sequence of homeomorphisms $\psi_n=\Psi_m(g_n)$ is equicontinuous.
\end{corollary}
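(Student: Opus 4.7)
The plan is to deduce this directly from Theorem~\ref{th:equicontinuous} by exhibiting a compact subset of $\T_m$ whose $\Psi_m$-image contains the sequence $(\psi_n)$. The natural candidate is
\begin{equation*}
K := \{f\} \cup \{g_n : n \in \mathbb{N}\},
\end{equation*}
which lies inside $\T_m$ by hypothesis.

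The first step is to verify that $K$ is compact. Since $K$ consists of a convergent sequence together with its limit in the $C^0$ metric, this is standard: given any open cover of $K$, pick a member $U$ that contains $f$; by uniform convergence $g_n \in U$ for all sufficiently large $n$, and the remaining finitely many $g_n$ are each covered by some member of the cover, yielding a finite subcover.

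The second step is simply to apply Theorem~\ref{th:equicontinuous} to $K$, which gives that $\Psi_m(K) = \{\Psi_m(f)\} \cup \{\psi_n : n \in \mathbb{N}\}$ is an equicontinuous family in $\mathcal{H}^+$. Equicontinuity passes to subfamilies, so in particular $\{\psi_n\}_{n \in \mathbb{N}}$ is equicontinuous, which is the desired conclusion. There is essentially no obstacle to overcome here: the corollary simply specializes the theorem to the most common type of compact set encountered in practice, and all of the substantive work is already carried out in the proof of Theorem~\ref{th:equicontinuous}.
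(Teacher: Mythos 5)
Your proof is correct and takes exactly the same route as the paper, which simply observes that $K=\{g_1,g_2,\ldots\}\cup\{f\}$ is compact and invokes Theorem~\ref{th:equicontinuous}. You have merely spelled out the standard compactness argument and the passage to a subfamily, both of which the paper leaves implicit.
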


\begin{proof}
The set $K=\{g_1, g_2, \ldots\} \cup \{ f \}$ is compact.
\end{proof}

\section{Flat Spots}\label{sec:flatspots}

Consider now the situation of a uniformly convergent sequence $g_n \rightrightarrows f$ in the space $\T_m$. We investigate the implications of this convergence on the corresponding constant slope models and conjugating homeomorphisms, as denoted in the following diagrams:
\begin{equation}\label{cd}
\begin{CD}
[0,1] @>\Phi(g_n)>> [0,1]\\
@V{\psi_n}VV @VV{\psi_n}V\\
[0,1] @>>g_n> [0,1]
\end{CD}
\qquad\qquad
\begin{CD}
[0,1] @>\Phi(f)>> [0,1]\\
@V{\phi}VV @VV{\phi}V\\
[0,1] @>>f> [0,1]
\end{CD}
\qquad\qquad
\arraycolsep=1.4pt
\begin{array}{rl}
\lambda_n&=\exp h(g_n) \\[1em]
\lambda&=\exp h(f)
\end{array}
\end{equation}

In this section we consider what happens if the homeomorphisms $\psi_n=\Psi(g_n)$ in~\eqref{cd} converge uniformly to a map $\psi$. A priori, $\psi$ is weakly monotone, but need not be a homeomorphism. It may have \emph{flat points:}
\begin{equation*}
\Flat(\psi):=\{x \,|\, \psi \text{ is constant on some neighborhood of } x\}
\end{equation*}
A \emph{flat value} is any element of the set $\psi(\Flat \psi)$.

The remaining properties of $\psi$ can be more or less guessed if we imagine that the maps $\Phi(g_n)$ are converging uniformly to some constant slope extension of $f$ with $\psi$ giving the semiconjugacy. In reality, we are able to prove the following results.

\begin{lemma}\label{lem:limpoints}
Suppose in~\eqref{cd} that $g_n\rightrightarrows f$ in $\T_m$ and $\psi_n\rightrightarrows \psi$.
If a sequence of points $y_{n}\rightarrow y$, then all limit points of the sequence $(\psi^{-1}_{n}(y_{n}))$ are in $\psi^{-1}(y)$.
\end{lemma}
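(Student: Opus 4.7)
The statement is essentially a compactness + uniform-convergence argument, and the proof should be short. The plan is to pick any limit point, extract a convergent subsequence, and then push it through $\psi_n$ using uniform convergence.

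Concretely, let $x$ be a limit point of the sequence $(\psi_n^{-1}(y_n))$. Since $[0,1]$ is compact, I can pass to a subsequence (relabelled as $n$ for notational convenience) such that $x_n := \psi_n^{-1}(y_n) \to x$. By construction, $\psi_n(x_n) = y_n \to y$, so it will suffice to show that $\psi_n(x_n) \to \psi(x)$, because then $\psi(x) = y$ and $x \in \psi^{-1}(y)$ as required.

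To get $\psi_n(x_n) \to \psi(x)$, I use the triangle inequality
\begin{equation*}
|\psi_n(x_n) - \psi(x)| \leq |\psi_n(x_n) - \psi(x_n)| + |\psi(x_n) - \psi(x)| \leq d(\psi_n,\psi) + |\psi(x_n) - \psi(x)|.
\end{equation*}
The first term tends to $0$ by the hypothesis $\psi_n \rightrightarrows \psi$. The second term tends to $0$ because $\psi$, being the uniform limit of the continuous maps $\psi_n$, is itself continuous, and $x_n\to x$.

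There is no real obstacle here. The only subtlety worth noting is that we are not claiming $\psi_n^{-1} \rightrightarrows \psi^{-1}$ (which is false whenever $\psi$ has flat spots and hence fails to be a homeomorphism); we merely track what can happen to a single convergent subsequence of preimages, and use uniform convergence of the forward maps together with continuity of $\psi$. The hypothesis $g_n \rightrightarrows f$ in $\T_m$ is not actually invoked in this lemma — it is present because the whole section takes place in that setting and later lemmas will need it.
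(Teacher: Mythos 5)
Your proof is correct and is essentially the same argument as the paper's: extract a convergent subsequence $x_n\to x$, then use the triangle inequality together with the uniform convergence $\psi_n\rightrightarrows\psi$ and the continuity of $\psi$ to conclude $\psi(x)=y$. The paper merely writes the three error terms explicitly with an $\epsilon$-accounting instead of your two-step decomposition; the content is identical.
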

\begin{proof}
Choose $\epsilon>0$ arbitrarily and let $x_{n_i} \rightarrow x'$, where $x_{n_{i}} = \psi^{-1}_{n_{i}}(y_{n_{i}})$.
Then for sufficiently large $i$ we have \\
$|y - y_{n_{i}}| < \epsilon$ by convergence $y_{n_{i}} \rightarrow y$;\\
$|\psi(x_{n_{i}}) - y_{n_{i}}| < \epsilon$ by uniform convergence $\psi_{n_{i}} \rightrightarrows \psi$ and $y_{n_{i}} = \psi_{n_{i}}(x_{n_{i}})$;\\
$|\psi(x_{n_{i}}) - \psi(x')| < \epsilon$ by convergence $x_{n_{i}} \rightarrow x'$ and continuity of $\psi$. \\
Combining these three inequalities $| y - \psi(x')| < 3 \epsilon$. Since $\epsilon$ was arbitrary, this shows $y = \psi(x')$, i.e., $x' \in \psi^{-1}(y)$.
\end{proof}

\begin{proposition}\label{prop:rectangles}
\emph{``Growth of rectangles''}\\
Suppose in~\eqref{cd} that $g_n\rightrightarrows f$ in $\T_m$, $\psi_n \rightrightarrows \psi$, and $\lambda_n\rightarrow\lambda'$. 
If $y<y'$ are two points such that $[y,y'] \cap \Crit(f)=\emptyset$ and none of $y,y',f(y),f(y')$ are flat values of $\psi$, then
\begin{equation*}
|\psi^{-1}\circ f (y')-\psi^{-1}\circ f(y)| = \lambda' \cdot |\psi^{-1}(y')-\psi^{-1}(y)|.
\end{equation*}
\end{proposition}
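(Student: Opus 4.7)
The plan is to pass through the conjugacy $g_n = \psi_n \circ \Phi(g_n) \circ \psi_n^{-1}$, exploit the constant-slope property of $\Phi(g_n)$ on the interval $[\psi_n^{-1}(y), \psi_n^{-1}(y')]$, and then take the limit $n\to\infty$ using Lemma~\ref{lem:limpoints}.

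First I would observe that the compact set $[y,y']$ has positive distance from the finite set $\Crit(f)$. By Lemma~\ref{lem:perturbcrit}, for all sufficiently large $n$ this distance exceeds the displacement of critical points, so $[y,y']\cap \Crit(g_n)=\emptyset$. Since $\psi_n\in\mathcal{H}^+$ is order-preserving and conjugacy preserves critical points, the interval $[\psi_n^{-1}(y),\psi_n^{-1}(y')]$ contains no critical point of $\Phi(g_n)$. Hence $\Phi(g_n)$ is affine with slope $\pm\lambda_n$ on that interval, yielding
\begin{equation*}
\bigl|\Phi(g_n)(\psi_n^{-1}(y'))-\Phi(g_n)(\psi_n^{-1}(y))\bigr| = \lambda_n\cdot\bigl|\psi_n^{-1}(y')-\psi_n^{-1}(y)\bigr|.
\end{equation*}
The conjugacy relation $\Phi(g_n)\circ\psi_n^{-1}=\psi_n^{-1}\circ g_n$ rewrites the left-hand side as $|\psi_n^{-1}(g_n(y'))-\psi_n^{-1}(g_n(y))|$, so
\begin{equation*}
\bigl|\psi_n^{-1}(g_n(y'))-\psi_n^{-1}(g_n(y))\bigr| = \lambda_n\cdot\bigl|\psi_n^{-1}(y')-\psi_n^{-1}(y)\bigr|.
\end{equation*}

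Next I would pass to the limit term by term. If $y$ is not a flat value of $\psi$, then $\psi^{-1}(y)$ is a singleton (otherwise $\psi$ would be constant on a nondegenerate interval mapping to $y$, making $y$ a flat value). Applying Lemma~\ref{lem:limpoints} with the constant sequence $y_n\equiv y$ shows that every subsequential limit of $(\psi_n^{-1}(y))$ lies in the singleton $\psi^{-1}(y)$, so $\psi_n^{-1}(y)\to\psi^{-1}(y)$; likewise $\psi_n^{-1}(y')\to\psi^{-1}(y')$. For the other two terms, uniform convergence $g_n\rightrightarrows f$ gives $g_n(y)\to f(y)$ and $g_n(y')\to f(y')$; then Lemma~\ref{lem:limpoints} applied with $y_n=g_n(y)$ and $y_n=g_n(y')$, together with the hypothesis that $f(y)$ and $f(y')$ are not flat values, yields $\psi_n^{-1}(g_n(y))\to\psi^{-1}(f(y))$ and $\psi_n^{-1}(g_n(y'))\to\psi^{-1}(f(y'))$. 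Combining these four convergences with $\lambda_n\to\lambda'$ in the displayed identity gives the desired equality.

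The main obstacle is the step where $\psi_n^{-1}$ is evaluated at moving points: a priori $\psi_n^{-1}$ need not converge uniformly (or even pointwise) to a well-defined inverse of $\psi$, since $\psi$ can be non-injective. This is exactly what the non-flat-value hypothesis controls, by forcing each relevant preimage set $\psi^{-1}(\cdot)$ to be a singleton and thereby upgrading Lemma~\ref{lem:limpoints} from a statement about subsequential limit points into honest convergence of the whole sequence. Everything else is routine algebra built around the conjugacy.
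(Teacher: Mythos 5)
Your proof is correct and follows essentially the same route as the paper's: pass to the constant-slope model $\Phi(g_n)$ via the conjugacy, use Lemma~\ref{lem:perturbcrit} to confine $\psi_n^{-1}(y)$ and $\psi_n^{-1}(y')$ to a single lap for large $n$, and take the limit via Lemma~\ref{lem:limpoints}. Your explicit remark that the non-flat-value hypothesis forces each preimage $\psi^{-1}(\cdot)$ to be a singleton, so that Lemma~\ref{lem:limpoints} yields genuine convergence of the whole sequence rather than merely control of subsequential limits, is a detail the paper leaves implicit but is exactly the right justification for that step.
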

The meaning of the proposition is visualized in Figure~\ref{fig:rect}, which shows the graph of a purported limit map $\psi$. We imagine $f$ acting on the vertical axis. The dashed gray lines represent critical points of $f$. The meaning of the lemma, then, is that the rectangle determined by $f(y),f(y')$ is $\lambda'$ times wider than the rectangle determined by $y,y'$.
\begin{figure}[htb!!]
\footnotesize Graph of $\psi$\\
\includegraphics[width=5cm]{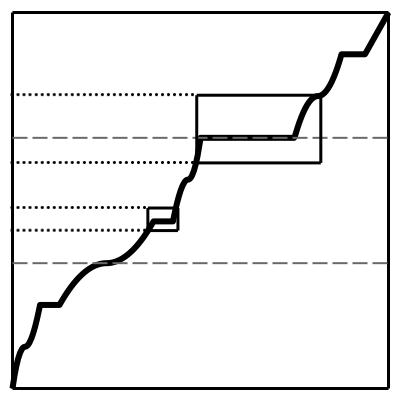}
\begin{picture}(0,0)
\put(-151,58){\tiny $y$}
\put(-152,67){\tiny $y'$}
\put(-160,83){\tiny $f(y)$}
\put(-162,107){\tiny $f(y')$}
\end{picture}
\caption{Growth of Rectangles}\label{fig:rect}
\end{figure}

\begin{proof}[Proof of Proposition~\ref{prop:rectangles}]
Using Lemma~\ref{lem:limpoints} and then (\ref{cd}) we have 
\begin{eqnarray*}
|\psi^{-1}\circ f(y')-\psi^{-1}\circ f(y)| &=& \lim_{n\rightarrow\infty} |\psi^{-1}_{n}\circ g_{n}(y') - \psi^{-1}_{n}\circ g_{n}(y)| \\
&=& \lim_{n\rightarrow\infty} |\Phi(g_n) \circ \psi^{-1}_{n}(y') - \Phi(g_n) \circ \psi^{-1}_{n}(y)|\\
\end{eqnarray*}
Now since $\Phi(g_n)$ are constant slope maps and since by Lemma~\ref{lem:perturbcrit} for sufficiently large $n$ both $\psi^{-1}_{n}(y'),$ $\psi^{-1}_{n}(y)$ lie in the same lap of $\Phi(g_n)$,
\begin{eqnarray*}
|\psi^{-1}\circ f(y')-\psi^{-1}\circ f(y)| &=& \lim_{n\rightarrow\infty} \lambda_{n} \cdot |\psi^{-1}_{n}(y') - \psi^{-1}_{n}(y)|.
\end{eqnarray*}
We finish by using Lemma~\ref{lem:limpoints} again
\begin{eqnarray*}
|\psi^{-1}\circ f(y')-\psi^{-1}\circ f(y)| &=& \lambda' \cdot |\psi^{-1}(y') - \psi^{-1}(y)|.
\end{eqnarray*}
\end{proof}

By letting $y,y'$ approach a point $b$ (possibly a flat value) from opposite sides, we prove

\begin{proposition}\label{prop:flat}
\emph{``Growth of flat spots''}\\
Suppose in~\eqref{cd} that $g_n\rightrightarrows f$ in $\T_m$, $\psi_n \rightrightarrows \psi$, and $\lambda_n\rightarrow\lambda'$.
\begin{enumerate}[label=\textnormal{(\alph*)},leftmargin=*]
\item\label{it:noncritical} If $b\notin\Crit(f)$, then $\len(\psi^{-1}(f(b)))=\lambda'\cdot\len(\psi^{-1}(b))$.
\item\label{it:endpoint} If $b\in\{0,1\}$, then $\len(\psi^{-1}(f(b)))\geq\lambda'\cdot\len(\psi^{-1}(b))$.
\end{enumerate}
\end{proposition}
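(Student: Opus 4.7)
The overall plan is to apply Proposition~\ref{prop:rectangles} and then pass to limits, exploiting two facts freely: since $\psi$ is nondecreasing it has at most countably many flat values, and since $f$ is piecewise monotone the preimage under $f$ of a countable set remains countable. So I can always choose sequences whose elements, together with their $f$-images, avoid $\psi(\Flat\psi)$.

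For part~\ref{it:noncritical}, I would fix $\delta>0$ with $[b-\delta,b+\delta]\subset[0,1]\setminus\Crit(f)$, on which $f$ is monotone, and pick sequences $y_n\to b^-$ and $y'_n\to b^+$ such that none of $y_n,y'_n,f(y_n),f(y'_n)$ is a flat value. Then $[y_n,y'_n]\cap\Crit(f)=\emptyset$, so Proposition~\ref{prop:rectangles} applies at every $n$:
\begin{equation*}
|\psi^{-1}\circ f(y'_n)-\psi^{-1}\circ f(y_n)| = \lambda'\,|\psi^{-1}(y'_n)-\psi^{-1}(y_n)|.
\end{equation*}
As $n\to\infty$, monotonicity of $\psi$ forces $\psi^{-1}(y_n)$ and $\psi^{-1}(y'_n)$ to the two endpoints of $\psi^{-1}(b)$, so the right side tends to $\lambda'\len(\psi^{-1}(b))$. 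Since $f$ is monotone near $b$, the values $f(y_n),f(y'_n)$ approach $f(b)$ from opposite sides, so by the same reasoning the left side tends to $\len(\psi^{-1}(f(b)))$, giving the claimed equality.

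For part~\ref{it:endpoint} I would treat $b=0$ (the case $b=1$ is symmetric). Set $c:=\len(\psi^{-1}(0))$; the inequality is trivial unless $c>0$, which I assume. The strategy is to locate two points of the flat spot $\psi^{-1}(f(0))$ at distance $\lambda' c$ from each other. Pick a sequence $y_k\to 0^+$ with $y_k,f(y_k)\notin\psi(\Flat\psi)$ and $[0,y_k]\cap\Crit(f)=\{0\}$ (possible since $\Crit(f)$ is finite). Applying Lemma~\ref{lem:perturbcrit} with $\rho$ smaller than the distance from $y_k$ to the nearest other critical point of $f$, I get $\Crit(g_n)\cap[0,y_k]=\{0\}$ for all large $n$; hence $\Phi(g_n)$ is monotone with slope $\lambda_n$ on $\psi_n^{-1}([0,y_k])=[0,\psi_n^{-1}(y_k)]$, so
\begin{equation*}
|\psi_n^{-1}(g_n(y_k)) - \psi_n^{-1}(g_n(0))| = \lambda_n\,\psi_n^{-1}(y_k).
\end{equation*}
First I let $n\to\infty$ with $y_k$ fixed: Lemma~\ref{lem:limpoints} pins $\psi_n^{-1}(y_k)\to\psi^{-1}(y_k)$ and $\psi_n^{-1}(g_n(y_k))\to\psi^{-1}(f(y_k))$ (both are single points since $y_k,f(y_k)$ are not flat values), while a subsequential limit gives $\psi_n^{-1}(g_n(0))\to p_k\in\psi^{-1}(f(0))$. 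This yields $|\psi^{-1}(f(y_k))-p_k|=\lambda'\,\psi^{-1}(y_k)$. Then I let $k\to\infty$: $\psi^{-1}(y_k)\to c$; monotonicity of $f$ near $0$ sends $\psi^{-1}(f(y_k))$ to one endpoint $q$ of $\psi^{-1}(f(0))$; and $p_k$, lying in the compact interval $\psi^{-1}(f(0))$, converges along a subsequence to some $p$ there. Thus $p,q\in\psi^{-1}(f(0))$ with $|q-p|=\lambda' c$, so $\len(\psi^{-1}(f(0)))\geq\lambda' c$.

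The main difficulty will be part~\ref{it:endpoint}: losing one side of $b$ rules out a direct appeal to Proposition~\ref{prop:rectangles}, so I must derive the scaling identity by hand using the constant slope of $\Phi(g_n)$ on a critical-point-free interval, and then carefully arrange the two nested limits ($n\to\infty$, then $k\to\infty$) while invoking Lemma~\ref{lem:limpoints} to trap \emph{both} endpoints of the resulting image interval inside the flat spot $\psi^{-1}(f(0))$. The inequality (rather than equality) is exactly the trace left by this one-sided approach.
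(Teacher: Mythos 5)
Your proof is correct and follows essentially the same route as the paper. Part~\ref{it:noncritical} is identical: squeeze $b$ with two-sided sequences avoiding flat values and critical points, apply Proposition~\ref{prop:rectangles}, and pass to the limit using monotonicity of $\psi$. Part~\ref{it:endpoint} is also the same idea — derive the scaling relation directly from the constant slope of $\Phi(g_n)$ on a critical-point-free lap next to $0$, and invoke Lemma~\ref{lem:limpoints} to trap a limit of $\psi_n^{-1}(g_n(0))$ inside $\psi^{-1}(f(0))$. The only deviation is organizational: the paper fixes one subsequence up front so that the trapped point $x\in\psi^{-1}(f(0))$ is independent of $y$, and then lets $y_i\to 0^+$ in a single step. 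You instead extract a fresh subsequence for each $y_k$, obtaining a sequence $p_k$, and then need a second diagonal/subsequence extraction at the end; this is harmless but slightly redundant, and since $\psi_n^{-1}(g_n(0))$ does not depend on $k$, you could fix a single $p$ once and for all, which is exactly what the paper does.
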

\begin{proof}
(a) Let $(y_{i})$ be an increasing sequence and let $(y'_{i})$ be a decreasing sequence both with limit equal to $b$, moreover such that $[y_{1},y'_{1}] \cap \Crit(f) = \emptyset$ and none of $y,y',f(y),f(y')$ are flat values of $\psi$. Then $f(y_{i})$, $f(y'_{i})$ converge to $f(b)$ from opposite sides.
Now 
$$
  \len \psi^{-1}(b) = \lim_{i\rightarrow\infty} |\psi^{-1}(y'_{i}) - \psi^{-1}(y_{i})|.
$$ 
Using the previous proposition we arrive at 
$$
  \len \psi^{-1}(f(b)) = \lim_{i\rightarrow\infty} |\psi^{-1}(f(y'_{i})) - \psi^{-1}(f(y_{i}))|
  = \lambda' \cdot \lim_{i\rightarrow\infty} |\psi^{-1}(y'_{i}) - \psi^{-1}(y_{i})| = \lambda' \cdot \len \psi^{-1}(b).
$$

(b) We proof the assertion for the case $b=0$. 
Using Lemma~\ref{lem:limpoints} all limit points of the sequence $(\psi^{-1}_{n}(g_n(0)))$ belong to the compact set $\psi^{-1}(f(0))$. So replacing $(\psi_{n})$ by a subsequence and taking into account $\psi_n \rightrightarrows \psi$ 
we may assume that $\psi^{-1}_{n}(g_{n}(0)) \rightarrow x \in \psi^{-1}(f(0))$.
If $0<y$, $(0,y] \cap \Crit(f)=\emptyset$, and $\{y,f(y)\} \cap \psi(\Flat \psi) = \emptyset$, then 
\begin{eqnarray}
|\psi^{-1}\circ f(y) - x | &=& \lim_{n\rightarrow\infty} |\psi^{-1}_{n}\circ g_{n}(y) - \psi^{-1}_n\circ g_{n}(0)| \label{eq:prop-flat-a}\\
&=& \lim_{n\rightarrow\infty} |\Phi(g_n)\circ\psi^{-1}_{n}(y)-\Phi(g_n)\circ\psi^{-1}_{n}(0)|\label{eq:prop-flat-b}\\
&=& \lim_{n\rightarrow\infty} \lambda_{n} \cdot |\psi^{-1}_{n}(y) - \psi^{-1}_{n}(0)|\label{eq:prop-flat-c}\\
&=& \lambda' \cdot |\psi^{-1}(y) - 0|,\label{eq:prop-flat-d}
\end{eqnarray}
see Figure~\ref{fig:rect2}. Equation (\ref{eq:prop-flat-a}) holds by Lemma~\ref{lem:limpoints}, (\ref{eq:prop-flat-b}) holds by (\ref{cd}), (\ref{eq:prop-flat-c}) holds since all $\Phi(g_n)$ have constant slope, and again (\ref{eq:prop-flat-d}) holds by Lemma~\ref{lem:limpoints}.

\begin{figure}[htb!!]
\footnotesize Graph of $\psi$\\
\includegraphics[width=5cm]{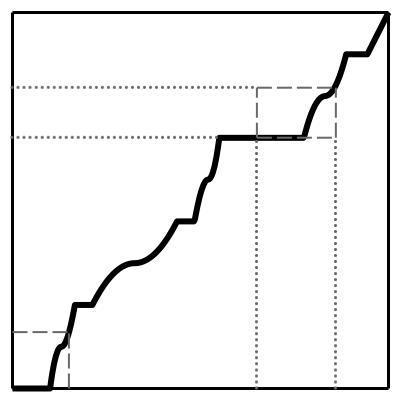}
\begin{picture}(0,0)
\put(-150,0){\tiny $0$}
\put(-150,23){\tiny $y$}
\put(-160,91){\tiny $f(0)$}
\put(-160,109){\tiny $f(y)$}
\put(-57,-4){\tiny $x$}
\end{picture}
\caption{Growth of Flat Spots}\label{fig:rect2}
\end{figure}
Now if $y_{i}$ converges to zero, $0<y_{i}$, $(0,y_{i}] \cap \Crit(f)=\emptyset$, and $\{y_{i},f(y_{i})\} \cap \psi(\Flat \psi) = \emptyset$ for all $i$, then $f(y_{i})$ converges to $f(0)$ from one side. 
Therefore 
$$
\len(\psi^{-1}(0)) = \lim_{i\rightarrow\infty} |\psi^{-1}(y_{i})|
$$
and
$$
\len(\psi^{-1}(f(0))) \geq \lim_{i\rightarrow\infty} |\psi^{-1}(f(y_{i})) - x| = 
 \lim_{i\rightarrow\infty} \lambda' |\psi^{-1}(y_{i}) - 0| = \lambda' \cdot \len(\psi^{-1}(0)). 
$$
\qedhere
\end{proof}

\begin{remark}\label{rem:c}
One might also consider the growth of flat spots of $\psi$ at critical points of $f$. Unfortunately, because of the folding, the strongest result we could prove was
\begin{enumerate}[leftmargin=*]
\item[(c)] If $b\in\Crit(f)$, then $\len(\psi^{-1}(f(b)))\geq\frac{\lambda'}{2}\cdot\len(\psi^{-1}(b))$.
\end{enumerate}
This result has no value for us, since $\lambda'/2$ may be less than $1$.
\end{remark}

\section{No Flat Spots}\label{sec:noflatspots}

\begin{proposition}\label{prop:homeomorphism}
Suppose in~\eqref{cd} that $g_n\rightrightarrows f$ in $\T_m$, $\psi_n\rightrightarrows \psi$, and $h(g_n)\to h(f)$. Then $\psi$ is a homeomorphism.
\end{proposition}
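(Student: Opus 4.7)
The plan is a proof by contradiction. Since $\psi$ is the uniform limit of increasing homeomorphisms of $[0,1]$ fixing $0$ and $1$, it is automatically a continuous weakly monotone surjection $[0,1]\to[0,1]$, hence a homeomorphism iff $\Flat(\psi)=\emptyset$. Suppose instead that $\psi$ has a flat value $b_0$ with $\ell_0 := \len(\psi^{-1}(b_0))>0$, and write $\lambda = e^{h(f)}$; the assumption $h(g_n)\to h(f)$ yields $\lambda_n\to\lambda$, so Proposition~\ref{prop:flat} applies with $\lambda'=\lambda$.

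\textbf{Core accounting.} For each $n\ge 0$, let $G_n \subseteq f^{-n}(b_0)$ be the set of ``good'' preimages: those $b$ whose forward orbit $b,f(b),\dots,f^{n-1}(b)$ avoids $\Crit(f)$. Iterating Proposition~\ref{prop:flat}(a) along this orbit gives $\len(\psi^{-1}(b)) = \ell_0/\lambda^n$ \emph{exactly} for every $b\in G_n$. Since this length determines $n$ uniquely, the sets $G_n$ are pairwise disjoint across $n$, and the corresponding flat spots are pairwise disjoint subintervals of $[0,1]$; hence
\[
\sum_{n\ge 0}\frac{|G_n|\,\ell_0}{\lambda^n} \;\le\; \sum_{b\in\psi(\Flat\psi)}\len(\psi^{-1}(b)) \;\le\; 1,
\]
which forces $|G_n|/\lambda^n \to 0$.

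\textbf{Collision with Theorem~\ref{th:preimages}.} Theorem~\ref{th:preimages} applied to $b_0$ produces a constant $c_0>0$ and infinitely many $n$ with $\#f^{-n}(b_0) \ge c_0 \lambda^n$. Each bad preimage $b\in f^{-n}(b_0)\setminus G_n$ has $f^i(b)\in\Crit(f)$ for some $0\le i<n$, which forces the corresponding critical point to lie in $f^{-(n-i)}(b_0)$. The cleanest way to close the argument is to choose $b_0$ \emph{outside} the countable set $\bigcup_{j\ge 0}f^j(\Crit(f))$: then no critical point of $f$ ever maps to $b_0$, the bad set is empty, $|G_n|=\#f^{-n}(b_0)\ge c_0\lambda^n$ infinitely often, and the sum above diverges — contradicting its upper bound of $1$.

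\textbf{Main obstacle.} The residual possibility is that $\psi(\Flat\psi)$ is entirely contained in the forward critical orbit, so no generic $b_0$ exists. I expect to handle this via quantitative control on bad preimages, using the lap-number bound $\#f^{-i}(c)\le \ell_i(f)=O(\lambda^i)$ (yielding a total bad count of at most $C\lambda^n$ with $C=C(f)$), together with a sharper choice of $b_0$ — for instance the flat value of maximal length $M$, which Proposition~\ref{prop:flat}(a)--(b) forces to sit at an interior critical point of $f$ (else $\ell(f(b_0))\ge\lambda M>M$) — so that the Theorem~\ref{th:preimages} constant $c_0$ for this specific $b_0$ exceeds $C$. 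The estimate $|G_n|\ge(c_0-C)\lambda^n$ then still contradicts $|G_n|=o(\lambda^n)$, closing the proof in all cases and showing $\Flat(\psi)=\emptyset$, so $\psi$ is a homeomorphism.
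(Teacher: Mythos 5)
Your overall structure is the same as the paper's: argue by contradiction, turn a flat spot at $b_0$ into a summable family of pairwise-disjoint flat spots at preimages using Proposition~\ref{prop:flat}, and collide that summability against the lower bound $\limsup \#f^{-n}(b_0)/\lambda^n>0$ from Theorem~\ref{th:preimages}. Your ``clean case'' (where $b_0$ is never a forward image of a critical point) is exactly the final step of the paper's proof. But the way you propose to reach that clean case does not work, and this is not a cosmetic issue --- it is the entire content of the proof.

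The gap is in your ``main obstacle'' paragraph. You want to pick $b_0$ so that the constant $c_0$ from Theorem~\ref{th:preimages} exceeds the bad-preimage constant $C$. There is no mechanism for that: Theorem~\ref{th:preimages} only asserts $\limsup \#f^{-n}(b_0)/\lambda^n>0$, with $c_0$ depending on $b_0$ in an uncontrolled way, while $C$ also depends on $b_0$ (through how the critical orbits hit $b_0$) and can easily dominate $c_0$. Choosing $b_0$ to be the flat value of maximal length pins $b_0$ to an interior critical point but gives you no leverage whatsoever on the ratio $c_0/C$; nothing forces $c_0>C$. (A secondary worry: your lap-number bound $\#f^{-i}(c)\le \ell_i(f)=O(\lambda^i)$ is not justified either; submultiplicativity gives $\ell_i(f)\ge\lambda^i$, and the subexponential correction need not be bounded.) So the argument as proposed does not close.

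What the paper does instead is qualitatively different and avoids constants entirely. It shows that \emph{new} flat values can always be manufactured: since every point in $(0,1)$ has a non-critical preimage, one can pull $b_0$ back along an infinite backward orbit $b_0\mapsfrom b_{-1}\mapsfrom b_{-2}\mapsfrom\cdots$ consisting of non-critical points, and Proposition~\ref{prop:flat}~\ref{it:noncritical} (read backward) shows each $b_{-n}$ is again a flat value with length $\ell_0/\lambda^n>0$. The key structural input is that the preimage graph $\Gamma$ of non-critical preimages of $b_0$ has \emph{no loops}, precisely because the length scaling by $\lambda^{-1}<1$ along each edge of $\Gamma$ is incompatible with a cycle. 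This no-loop property is then used in a finite pigeonhole argument: there are finitely many critical points, so one can pick $n_0$ larger than all first-entrance times of critical orbits into $B=\{b_{-1},b_{-2},\dots\}$; if some critical orbit then hit $b_{-n_0}$, one would produce a non-critical periodic orbit, i.e.\ a loop in $\Gamma$ --- contradiction. Thus $b_{-n_0}$ lies outside $\bigcup_j f^j(\Crit f)$, and Theorem~\ref{th:preimages} is applied to $b_{-n_0}$ (not to $b_0$), where all $n$-preimages are automatically ``good.'' Your proposal is missing both this ``generate new flat values by pulling back'' step and the no-loop/pigeonhole argument that guarantees escape from the forward critical orbit after finitely many pullbacks.
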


Here is the idea of the argument. We want to show that $\psi$ has no flat spots. Suppose to the contrary that $\psi$ collapses an interval of length $l>0$ to a single point $b$. As long as we avoid critical points, we can use Proposition~\ref{prop:flat} to produce other flat spots. The natural choice is to work forward along the orbit $b, f(b), \ldots$ until we find a flat spot of length $\lambda^n l > 1$, as in Table~\ref{tab:flat1}. Unfortunately, this argument, even using Remark~\ref{rem:c}, does not rule out flat spots on periodic orbits containing critical points. So instead we work backward along an orbit, producing flat spots at all points along a whole tree of preimages, as in Table~\ref{tab:flat2}.

\begin{table}[htb!!]
\begin{minipage}[b][11em][t]{.45\textwidth}
\small
\captionsetup{font=small,justification=centering}
\def\arraystretch{1.2}
\caption{Growth of flat spots, working forward}\label{tab:flat1}
\begin{tabular}{Y{.125} Y{0.50} Y{0.25}}
& Flat spot & Length \tabularnewline \cline{2-3}
& $b$ & $l$ \tabularnewline
& $f(b)$ & $\lambda l$ \tabularnewline
& $f^2(b)$ & $\lambda^2 l$ \tabularnewline
& $\vdots$ & $\vdots$ \tabularnewline
\end{tabular}
\end{minipage}%
\hspace{.05\textwidth}
\begin{minipage}[b][11em][t]{0.45\textwidth}
\small
\captionsetup{font=small,justification=centering}
\def\arraystretch{1.2}
\caption{\footnotesize Growth of flat spots, working backward}\label{tab:flat2}
\begin{tabular}{Y{0.67} Y{0.33}}
Flat spots & Length \tabularnewline \hline
$b$ & $l$ \tabularnewline
Each point of $f^{-1}(b)$ & $\lambda^{-1}l$ \tabularnewline
Each point of $f^{-2}(b)$ & $\lambda^{-2}l$ \tabularnewline
$\vdots$ & $\vdots$ \tabularnewline \midrule
\multicolumn{2}{c}{Total length: $l\sum\lambda^{-n}\#f^{-n}(b)=\infty$}
\end{tabular}
\end{minipage}
\end{table}

\begin{proof}[Proof of Proposition~\ref{prop:homeomorphism}]
Suppose to the contrary that $\Flat(\psi)\neq\emptyset$.
Choose $b\in\psi(\Flat \psi)$ and suppose for now that $b\notin\{0,1\}$.
Let $\Gamma = (V,E)$ be the smallest directed graph such that
$b \in V$ and if $v \in V$, $w\in[0,1]\setminus\Crit(f)$, and $f(w)=v$, then $w \in V$ and $(w,v) \in E$.
For any $v \in V$ write $|v|=\len \psi^{-1}(v)$ for the length of the flat spot.
By Proposition~\ref{prop:flat},
\begin{equation}\label{eq:prop-noflat-a}
\text{if } (w, v) \in E, \text{ then } |w| = \lambda^{-1} |v|.
\end{equation}
By supposition $|b|>0$ and consequently $|v|>0$ for all $v \in V$. 
From (\ref{eq:prop-noflat-a}) we get that $\Gamma$ contains no loops (because $\lambda=\exp h(f)>1$),
i.e. $\Gamma$ is a tree.

Unfortunately, we do not know if all preimages of $b$ are contained in this tree; we still need to avoid critical points.
Since $f$ is transitive and piecewise-monotone, each point of $(0,1)$ has non-critical preimages.
It follows that there is a backward orbit 
$$
 \dots \mapsto b_{-3} \mapsto b_{-2} \mapsto b_{-1} \mapsto b_{0}
$$
with $(b_{-n-1},b_{-n}) \in E$ for all $n \in \mathbb{N}$.
In other words, the set $B=\{b_{-1},b_{-2},\ldots\}$ contains no critical points of $f$.
Choose $n_0\geq1$ such that
\begin{equation}\label{eq:prop-noflat-b}
\begin{gathered}
\text{for each }c\in\Crit(f)\text{ whose forward orbit enters }B,\\
\text{ if the first entrance is at }b_{-m}\text{, then }m<n_0.
\end{gathered}
\end{equation}
We will prove that
\begin{equation}\label{eq:prop-noflat-c}
b_{-n_0}\text{ is not in the forward orbit of any critical point.}
\end{equation}

For suppose to the contrary that $f^t(c)=b_{-n_0}$, $c\in\Crit(f)$. 
Choose $r<t$ maximal such that $f^r(c)\in\Crit(f)$.
Choose $s>r$ minimal such that $f^s(c)\in B$.
By~\eqref{eq:prop-noflat-b}, $f^s(c)=b_{-m}$ with $m<n_0$.
\begin{equation*}
c \mapsto \cdots \mapsto 
\underbrace{f^r(c)}_{\mathclap{\substack{\text{last critical}\\\text{point before $b_{-n_0}$}}}}
\mapsto
\overbrace{
\cdots \mapsto
\underbrace{f^s(c)=b_{-m}}_{\mathclap{\substack{\text{first entrance to $B$}\\\text{after $f^r(c)$}}}}
 \mapsto \cdots
 }^{\substack{\text{No critical points}\\\text{by maximality of $r$}}}
 \mapsto
 \overbrace{
 f^t(c)=b_{-n_0} \mapsto \cdots \mapsto b_{-m}
 }^{\substack{\text{No critical points}\\\text{by construction of $B$}}}
\end{equation*}
This gives us a periodic point $b_{-m}\in V$ with no critical points in its periodic orbit.
This contradicts the fact that $\Gamma$ contains no loops, proving~\eqref{eq:prop-noflat-c}.

Now we add together the lengths of the flat spots at all preimages of $b_{-n_0}$.
By~\eqref{eq:prop-noflat-c} we have $f^{-i}(b_{-n_0})\subset V$ for all $i\in\mathbb{N}$.
By the absence of loops in $\Gamma$ we have $f^{-i}(b_{-n_0})\cap f^{-j}(b_{-n_0}) = \emptyset$ for $i\neq j$.
When we apply~\eqref{eq:prop-noflat-a} and Theorem~\ref{th:preimages} we get
$$
  \sum_{v\in V} |v| \geq |b_{-n_0}| \sum_{i\in \mathbb{N}} \lambda^{-i} \# f^{-i}(b_{-n_0}) = \infty.
$$
This is our contradiction, since the flat spots of $\psi$ are pairwise disjoint intervals contained in $[0,1]$.

Finally, consider the case when $b \in \{0,1\}$. 
If $f(b)$ belongs to $(0,1)$, then by Proposition~\ref{prop:flat}~\ref{it:endpoint} we may replace $b$ with $f(b)$ and proceed as before. If both $b, f(b) \in \{0,1\}$ but $f^2(b)$ belongs to $(0,1)$, then we use two applications of Proposition~\ref{prop:flat}~\ref{it:endpoint} to replace $b$ with $f^2(b)$ and proceed as before.

If all three points $b, f(b), f^2(b) \in \{0,1\}$, then two of these three points must coincide, which is impossible since by Proposition~\ref{prop:flat}~\ref{it:endpoint} we have
$\len(\psi^{-1}(b)) < \len(\psi^{-1}(f(b))) < \len(\psi^{-1}(f^{2}(b)))$.
\end{proof}

\section{Concluding Arguments}\label{sec:wrapup}

\begin{lemma}\label{lem:joint-cont}
Composition $(f_{1},f_{2}) \mapsto f_{1}\circ f_{2}$ is jointly continuous as a map $\mathcal{C}^0\times\mathcal{C}^0 \rightarrow \mathcal{C}^0$.
Inversion $\psi \mapsto \psi^{-1}$ is continuous as a map $\mathcal{H}^+ \to \mathcal{H}^+$.
\end{lemma}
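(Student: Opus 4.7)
The plan is to prove each claim by reducing to uniform continuity on the compact interval $[0,1]$, which comes for free from the general fact that a continuous function on $[0,1]$ is automatically uniformly continuous.

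For joint continuity of composition, I would fix $(f_1,f_2)\in\mathcal{C}^0\times\mathcal{C}^0$ and $\epsilon>0$. Use the triangle inequality to split
\begin{equation*}
|f_1(f_2(x))-g_1(g_2(x))| \leq |f_1(f_2(x))-f_1(g_2(x))| + |f_1(g_2(x))-g_1(g_2(x))|.
\end{equation*}
The second summand is at most $d(f_1,g_1)$, so is small as soon as $g_1$ is $\frac{\epsilon}{2}$-close to $f_1$. For the first summand, choose $\delta>0$ from the uniform continuity of $f_1$ so that $|a-b|<\delta$ implies $|f_1(a)-f_1(b)|<\frac{\epsilon}{2}$; then $d(f_2,g_2)<\delta$ suffices. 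Taking the maximum $x\in[0,1]$ gives $d(f_1\circ f_2,g_1\circ g_2)<\epsilon$.

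For continuity of inversion on $\mathcal{H}^+$, I would fix $\psi\in\mathcal{H}^+$ and $\epsilon>0$. The key observation is that $\psi^{-1}$ is itself a homeomorphism of the compact interval $[0,1]$, hence uniformly continuous; so there exists $\delta>0$ such that $|y_1-y_2|<\delta$ implies $|\psi^{-1}(y_1)-\psi^{-1}(y_2)|<\epsilon$. Now suppose $\psi_n\rightrightarrows\psi$ with $\psi_n\in\mathcal{H}^+$. For $n$ large enough that $d(\psi_n,\psi)<\delta$ and any $y\in[0,1]$, set $x_n=\psi_n^{-1}(y)$, so that
\begin{equation*}
|\psi(x_n)-y| = |\psi(x_n)-\psi_n(x_n)| \leq d(\psi,\psi_n) < \delta,
\end{equation*}
hence $|\psi_n^{-1}(y)-\psi^{-1}(y)|=|\psi^{-1}(\psi(x_n))-\psi^{-1}(y)|<\epsilon$, and taking the supremum gives $d(\psi_n^{-1},\psi^{-1})<\epsilon$.

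There is really no main obstacle: both statements are classical and rely only on uniform continuity on $[0,1]$. The only minor subtlety worth flagging is that inversion is not continuous on the larger space of weakly monotone surjections (where the limit may fail to be a homeomorphism), so it is essential that we are working inside $\mathcal{H}^+$ where $\psi^{-1}$ is guaranteed to exist and be uniformly continuous.
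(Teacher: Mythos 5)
Your proof of joint continuity of composition uses exactly the same triangle-inequality decomposition as the paper (just with the two summands written in the opposite order), handling one term by $d(f_1,g_1)$ and the other by uniform continuity of $f_1$. For continuity of inversion, the paper simply cites a reference, while you carry out the standard argument explicitly; your argument is correct, and your closing remark that inversion fails to be continuous on the larger space of weakly monotone surjections is an apt observation.
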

\begin{proof}
We could not find the first statement anywhere, so we prove it here. Let $f_{1},f_{2}$ be given. Fix $\epsilon > 0$. 
Uniform continuity of $f_{1}$ give us $\eta$ such that if $|x-x'| < \eta$ then 
$|f_{1}(x)-f_{1}(x)'|<\frac{1}{2}\epsilon$.
If $f'_{1},f'_{2} \in \mathcal{C}^0$ are such that $d(f_{1},f'_{1}) < \frac12\epsilon$ and 
$d(f_{2},f'_{2}) < \eta$ then 
$$
|f'_{1}\circ f'_{2}(x) - f_{1}\circ f_{2}(x)| \leq 
|f'_{1}\circ f'_{2}(x) - f_{1}\circ f'_{2}(x)| + |f_{1}\circ f'_{2}(x) - f_{1}\circ f_{2}(x)| < 
\frac{\epsilon}{2} + \frac{\epsilon}{2} = \epsilon.
$$
Therefore $d(f'_{1}\circ f'_{2}, f_{1}\circ f_{2}) < \epsilon$.

The second statement is similar. A proof appears in \cite[Lemma 3.1 (c)]{KMS}
\end{proof}

\begin{lemma}\label{lem:checkCS}
If $f$ is piecewise monotone, $\psi\in\mathcal{H}^+$, $\lambda>1$, and
\begin{equation}
\label{checkCS} |\psi^{-1}\circ f(y') - \psi^{-1}\circ f(y)| = \lambda \cdot |\psi^{-1} y' - \psi^{-1} y|
\end{equation}
holds for all $y<y'$ such that $[y,y']\cap\Crit(f)=\emptyset$, then $\psi^{-1}\circ f\circ\psi$ has constant slope $\lambda$.
\end{lemma}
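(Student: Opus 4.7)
The plan is to substitute into the hypothesis to transform the condition on $f$ into a constant-slope condition on $\tilde{f}:=\psi^{-1}\circ f\circ \psi$, and then verify that this constant-slope condition actually forces $\tilde{f}$ to be affine with slope $\pm\lambda$ on each of its laps.

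First I would observe that the critical set transfers under conjugation. Since $\psi\in\mathcal{H}^+$ is a homeomorphism, $\tilde{f}$ fails to be monotone on a neighborhood of $x$ if and only if $f$ fails to be monotone on a neighborhood of $\psi(x)$; hence $\Crit(\tilde{f})=\psi^{-1}(\Crit(f))$. So if we take $x<x'$ in $[0,1]$ with $[x,x']\cap \Crit(\tilde{f})=\emptyset$ and set $y=\psi(x)$, $y'=\psi(x')$, then $y<y'$ and $[y,y']\cap\Crit(f)=\emptyset$.

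Next I would plug into \eqref{checkCS}. Because $\psi^{-1}(y)=x$, $\psi^{-1}(y')=x'$, and $\psi^{-1}\circ f(y)=\psi^{-1}\circ f\circ\psi(x)=\tilde{f}(x)$ (and similarly for $y'$), the hypothesis becomes
\[
|\tilde{f}(x')-\tilde{f}(x)| = \lambda\cdot |x'-x|
\]
for every $x<x'$ with $[x,x']\cap\Crit(\tilde{f})=\emptyset$. Fix a lap $L$ of $\tilde{f}$, i.e.\ a maximal open interval on which $\tilde{f}$ is monotone. On $L$ the sign of $\tilde{f}(x')-\tilde{f}(x)$ is constant in $(x,x')$, so the displayed equation upgrades to $\tilde{f}(x')-\tilde{f}(x)=\pm\lambda(x'-x)$ with a single sign. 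Thus $\tilde{f}$ is affine on $L$ with slope $\pm\lambda$, and therefore $|\tilde{f}'(x)|=\lambda$ for every $x\notin \Crit(\tilde{f})$, which is exactly what is meant by $\tilde{f}$ having constant slope $\lambda$.

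There is really no main obstacle in this argument; the proof is purely bookkeeping, and the only thing to be careful about is that the critical-point condition in the hypothesis matches up correctly with the laps of $\tilde{f}$ after the change of variables $y=\psi(x)$. The proof should be short.
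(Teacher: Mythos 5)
Your proof is correct and follows essentially the same route as the paper: change variables via $y=\psi(x)$, $y'=\psi(x')$, observe that $\Crit(\tilde f)=\psi^{-1}(\Crit(f))$, and reduce \eqref{checkCS} to $|\tilde f(x')-\tilde f(x)|=\lambda|x'-x|$ on each lap. You just spell out the final step (that this metric condition on a lap forces affinity with slope $\pm\lambda$) a bit more explicitly than the paper does.
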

\begin{proof}
This is a simplified version of \cite[Lemma 4.6.4]{ALM}, but we include the proof for the reader's convenience. Write $\tilde{f}=\psi^{-1}\circ f\circ\psi$. Two points $x<x'$ belong to the interior of a lap of monotonicity of $\tilde{f}$ if and only if the corresponding points $y=\psi(x), y'=\psi(x')$ satisfy $[y,y']\cap\Crit(f)=\emptyset$, and in this case Equation~\eqref{checkCS} reduces to
$$ |\tilde{f}(x')-\tilde{f}(x)| = \lambda \cdot |x'-x|, $$
which says exactly that $\tilde{f}$ has constant slope $\lambda$.
\end{proof}

\begin{thm3}
If a sequence of maps $g_n\in\T_m$ converges uniformly to $f\in\T_m$ and if $h(g_n)\to h(f)$, then the constant slope models $\Phi(g_n)$ converge uniformly to $\Phi(f)$.
\end{thm3}

\begin{proof}
We need to show uniform convergence $\Phi(g_n) \rightrightarrows \Phi(f)$, i.e.
\begin{equation*}
\psi^{-1}_n \circ g_n \circ \psi_n \rightrightarrows \phi^{-1} \circ f \circ \phi,
\end{equation*}
where $\psi_n=\Psi(g_n)$, $\phi=\Psi(f)$ as in~\eqref{cd}. We already have $g_n\rightrightarrows f$ by hypothesis, so by Lemma~\ref{lem:joint-cont} it is enough to show uniform convergence $\psi_n \rightrightarrows \phi$.

By Theorem~\ref{th:equicontinuous}, $\{\psi_n\}$ is an equicontinuous family. Let $\psi$ be any subsequential limit $\psi_{n_i} \rightrightarrows \psi$. By Proposition~\ref{prop:homeomorphism}, $\psi$ is a homeomorphism, so we may consider the map $\psi^{-1} \circ f \circ \psi$. By Proposition~\ref{prop:rectangles} and Lemma~\ref{lem:checkCS} it has constant slope $\lambda=\exp h(f)=\lim_{n\to\infty} \exp h(g_n)$. By the uniqueness of constant slope models, $\psi^{-1}\circ f \circ \psi = \phi^{-1}\circ f \circ \phi$. By the uniqueness of the conjugating homeomorphism, $\psi=\phi$. Since this is true for every subsequential limit, it follows from equicontinuity that we have uniform convergence of the whole sequence $\psi_n \rightrightarrows \phi$, as desired.
\end{proof}

\section{Open Questions}\label{sec:further}

There remains still the interesting question of what happens when we have convergence of maps to a limit $f$ in $\T_m$ without convergence of entropy. What other maps besides $\Phi(f)$ can the constant slope models converge to?

\begin{question}
Characterize the set of all limit points of $\Phi(g)$ as $g\rightrightarrows f$ in $\T_m$. Is each limit point a constant slope extension of $f$? What other necessary or sufficient conditions are there? What slopes are possible?
\end{question}

We can also ask about stronger versions of continuity for the operator $\Phi_m$.

\begin{question}
Is $\Phi_m$ locally Lipschitz or Holder continuous at continuity points of the entropy? For $m\leq4$ can we get also global Lipschitz or Holder continuity?
\end{question}

\section*{Appendix}
{\footnotesize
The goal of this appendix is to finish the proof of Theorem~\ref{th:equicontinuous} by developing analogs of Lemmas~\ref{lem:unileo} and~\ref{lem:equiunileo} for maps which are transitive but not weak mixing. In essence, this is nothing more than a long technical obstacle, because for piecewise monotone interval maps, the various topological notions of a system's indecomposability are very closely related. In particular, it is known that
\begin{itemize}
\item The notions of topological weak mixing, topological strong mixing, and topological exactness (the locally eventually onto property) coincide.
\item If $f$ is transitive but not weak mixing, then it has a unique fixed point $e$, it interchanges $[0,e]$ with $[e,1]$, and both of the maps $f^2|_{[0,e]}$, $f^2|_{[e,1]}$ are weak mixing.
\end{itemize} 
A nice exposition of these results can be found in Ruette's textbook \cite[Proposition 2.34 and Theorem 2.19]{Ru}. We will use these results freely throughout this section.

\begin{lem9'}
Fix $f\in\T_m\setminus\M_m$ with unique fixed point $e$ and fix $\rho>0$. Then there exists $\zeta>0$ such that if $g\in N_\zeta(f)$, then $g$ has a unique fixed point $e'$ and $|e-e'|<\rho$.
\end{lem9'}
\begin{proof}
The proof is similar to the proof of Lemma~\ref{lem:perturbcrit}, and is left as an exercise for the reader.
\end{proof}

\begin{lem10'}
If $f\in\T_m$ has a unique fixed point $e$, then $f([0,e])\supseteq [e,1]$ and $f^2([0,e)) \supseteq [0,e]$.
\end{lem10'}
\begin{proof}
Notice that a map whose graph intersects the diagonal only once must lie above the diagonal to the left of this fixed point and below the diagonal to the right. Together with surjectivity, this implies that $f$ maps some point less than $e$ to $1$ and some point greater than $e$ to $0$. This gives the first result $f([0,e])\supseteq [e,1]$.

This also implies the containment $f^2([0,e))\supseteq [0,e)$. Now if $f$ is weak mixing, then $f^2$ is also weak mixing (iteration preserves the weak mixing property), hence $[0,e)$ is not invariant under $f^2$ and so the containment is strict $f^2([0,e))\supsetneq [0,e)$, yielding the second result.

Now suppose that $f\in\T_m\setminus\M_m$. Then the second iterate restricted to $[0,e]$ is locally eventually onto, so there is a minimal natural number $N$ with $f^{2N}([0,e))=[0,e]$. Surjectivity and the intermediate value theorem give us immediately the following containments
\begin{equation*}
[0,e) \subseteq f^2([0,e)) \subseteq f^4([0,e)) \subseteq \cdots,
\end{equation*}
and as soon as one of these containments is an equality, then all the following containments must be equalities also. Since $[0,e)$ omits only one point, we get $N\leq 1$
\end{proof}

\begin{lem11'}
Fix $f\in\T_m\setminus\M_m$ with unique fixed point $e$. Then there exist $\rho,\zeta>0$ such that if $g\in N_\zeta(f)$, then $g^4([\rho,e-\rho])\cup g^5([\rho,e-\rho]) = [0,1]$.
\end{lem11'}
\begin{proof}
We choose $\rho$ as in the proof of Lemma~\ref{lem:ontopert}, but with the following additional requirement. Since $e$ is fixed and not critical, we may choose $\rho$ so that
\begin{equation*}
N_\rho(f^i(N_\rho(e))) \cap N_\rho(\Crit f)=\emptyset \text{ for }i=0,1,2,3.
\end{equation*}
Now choose $\zeta$ answering to $\rho$ in Lemmas~\ref{lem:perturbcrit} and~\ref{lem:perturbcrit}$'$, and satisfying $\zeta<\rho$ and
\begin{equation*}
d(f,g)<\zeta \implies d(f^i,g^i)<\rho \text{  for }i=0,1,2,3.
\end{equation*}
Now fix $g\in N_\zeta(f)$. Our choice of $\rho$ and $\zeta$ gives
\begin{equation*}
g^i(N_\rho(e)) \cap \Crit(g)=\emptyset \text{ for }i=0,1,2,3.
\end{equation*}
If a point $c$ is critical for $g^2$, then either $c$ or $g(c)$ is critical for $g$. Therefore
\begin{equation}\label{nocrit}
N_\rho(e) \cap \Crit(g^2) = \emptyset \text{  and } g^2(N_\rho(e)) \cap \Crit(g^2) = \emptyset.
\end{equation}
By Lemma~\ref{lem:perturbcrit}$'$, $g$ has a unique fixed point $e'$ and $e'\in N_\rho(e)$. To finish the proof it suffices to show the following three containments:
\begin{enumerate}[label=\textnormal{(\roman*)}]
\item\label{it:step1} $g^2([\rho,e-\rho]) \supseteq [0,g^2(e-\rho)]$,
\item\label{it:step2} $g^2([0,g^2(e-\rho)]) \supseteq [0,e']$, and
\item\label{it:step3} $g([0,e']) \supseteq [e',1]$,
\end{enumerate}

By Lemma~\ref{lem:ontopert}, there is $x\in[\rho,1-\rho]$ with $g^2(x)=0$. Then $x\in\Crit(g^2)$, so by~\eqref{nocrit}, $x\notin N_\rho(e)$. Since $f^2([e,1])=[e,1]$ and $d(g^2,f^2)<\rho<e$, $x\notin [e,1]$ either. Therefore $x\in[\rho,e-\rho]$. This proves~\ref{it:step1}.

By Lemma~\ref{lem:onto}$'$ applied to $g$ we can find $y<e'$ with $g^2(y)=\max g^2|_{[0,e']}\geq e'$. Since this is a local maximum we have $y\in\Crit(g^2)$. By~\eqref{nocrit} neither $x$ nor $y$ belongs to $g^2(N_\rho(e))\supset (g^2(e-\rho),e')$. We have shown $0\leq x,y \leq g^2(e-\rho)$, which proves~\ref{it:step2}.

Lemma~\ref{lem:onto}$'$ applied to $g$ gives~\ref{it:step3}.
\end{proof}

\begin{lem12'}
For all $f\in\T_m$ and all $\epsilon>0$ there exists $k\in\mathbb{N}$ such that for all $x,y\in[0,1]$,
\begin{equation*}
y-x>\epsilon \implies \exists i\in\{0,1\} : f^{2k+i}([x,y]) = [0,e].
\end{equation*}
\end{lem12'}
\begin{proof}
 Let $e$ be the unique fixed point of $f$. We may apply Lemma~\ref{lem:unileo} to $f^2$ on each side of the phase space $[0,e]$, $[e,1]$ separately. Fixing $\epsilon>0$ we find a natural number $k$ so that if $y-x>\epsilon/2$ and $x,y\in[0,e]$, then $f^{2k}([x,y])=[0,e]$, whereas if $y-x>\epsilon/2$ and $x,y\in[e,1]$, then $f^{2k}([x,y])=[e,1]$. But $f([e,1])=[0,e]$. Now if $y-x>\epsilon$, then either $x,y$ are both on the same side of $e$, or one of $e-x$, $y-e$ is greater than $\epsilon/2$. In either case the result follows.
\end{proof}

\begin{lem13'}
For all $f\in\T_m\setminus\M_m$ and all $\epsilon>0$ there exist $k\in\mathbb{N}$ and $\eta>0$ such that for all $g\in N_\eta(f)$ and all $x,y\in[0,1]$,
$$y-x>\epsilon \implies \exists i\in\{0,1\} : g^{2k+i+4}([x,y])\cup g^{2k+i+5}([x,y]) =[0,1].$$
\end{lem13'}
\begin{proof}
Let $f,\epsilon$ be given. Let $e$ be the unique fixed point of $f$. Choose $k$ as in Lemma~\ref{lem:unileo}$'$ and $\rho,\zeta$ be as in Lemma~\ref{lem:ontopert}$'$. Choose $\eta<\zeta$ small enough that $d(f,g)<\eta$ implies $d(f^{2k+i},g^{2k+i})<\rho$ for $i=0,1$. Fix $g\in N_\eta(f)$ and $x,y\in [0,1]$ with $y-x>\epsilon$. Find $i\in\{0,1\}$ with $f^{2k+i}([x,y])=[0,e]$. Then
$$g^4(g^{2k+i}([x,y])) \cup g^5(g^{2k+i}([x,y])) \supseteq g^4([\rho,e-\rho]) \cup g^5([\rho,e-\rho]) = [0,1].$$ This completes the proof.
\end{proof}

Finally, we show how to complete the proof of Theorem~\ref{th:equicontinuous} in the general case $K\subset \T_m$, allowing for maps which are transitive but not weak mixing.

\begin{proof}[Proof of Theorem~\ref{th:equicontinuous}, (continued)]
Fix $\epsilon>0$. The neighborhoods around each $f\in K$ guaranteed by Lemmas~\ref{lem:equiunileo} and~\ref{lem:equiunileo}$'$ form an open cover of $K$. Pass to a finite subcover and let $k_0$ be the maximum of the corresponding values of $k$. Let $g\in K$. It belongs to one of those neighborhoods, so choosing $x,y\in[0,1]$ with $y-x>\epsilon$ we have 
$$\exists k\leq k_0\,\, \exists i\in\{0,1\} : g^{k+2}([x,y])=[0,1] \text{ or } g^{2k+i+4}([x,y])\cup g^{2k+i+5}([x,y])=[0,1].$$
In either case, further applications of the map $g$ to both sides of the equation yields
$$g^{2k_0+5}([x,y]) \cup g^{2k_0+6}([x,y]) = [0,1].$$
Write $\tilde{g}=\Phi_m(g)$ for the constant slope model and $\psi=\Psi_m(g)$ for the conjugating homeomorphism. Now we pass through the conjugacy $g=\psi \circ \tilde{g} \circ \psi^{-1}$ to obtain
$$\tilde{g}^{2k_0+5}([\psi^{-1}x, \psi^{-1}y]) \cup \tilde{g}^{2k_0+6}([\psi^{-1}x, \psi^{-1}y])=[0,1].$$
Thus at least one of the two intervals in the union above has length $\geq\frac12$. But $\tilde{g}$ has Lipschitz constant $\lambda$, so an interval which it stretches to length $\frac12$ in at most $k_0+6$ steps must have length at least $\frac12\lambda^{-k_0-6}$. Writing $\delta=\frac12\lambda^{-k_0-6}$ we have proved that
$$ \forall \epsilon>0\,\, \exists \delta>0\,\, \forall \psi\in\Psi_m(K)\,\, \forall x,y\in[0,1]\, :\, y-x>\epsilon \implies \psi^{-1} y - \psi^{-1} x > \delta.$$
But this says exactly that the family $\Psi_m(K)$ is equicontinuous.
\end{proof}

}


\begin{thebibliography}{99}

\bibitem{ALM}
Ll.~Alsed{\`a}, J.~Llibre and M.~Misiurewicz,
\emph{Combinatorial dynamics and entropy in dimension one.}
 World Scientific Publishing Co., Inc., River Edge, NJ, 2000.

\bibitem{AM}
Ll.~Alsed{\`a} and M.~Misiurewicz,
Semiconjugacy to a map of a constant slope.
\emph{Discrete Contin. Dyn. Syst. Ser. B} \textbf{20}~(2015), no. 10, 3403--3413.

\bibitem{B}
J.~Buzzi,
Subshifts of quasi-finite type.
\emph{Invent. Math.} \textbf{159}~(2005), 369--406.

\bibitem{G}
B.~Gurevi\v{c},
Topological entropy of a countable Markov chain. (Russian)
\emph{Dokl. Akad. Nauk SSSR} \textbf{187}~(1969), 715--718.

\bibitem{K}
B.~Kitchens,
\emph{Symbolic dynamics. One-sided, two-sided, and countable state Markov shifts.}
Universitext. Springer-Verlag, Berlin, 1988.

\bibitem{KMS}
S.~Kolyada, M.~Misiurewicz, and L'.~Snoha,
Spaces of transitive interval maps.
\emph{Ergod. Th. and Dynam. Sys.} \textbf{35}~(2015), 2051--2070.

\bibitem{M}
M.~Misiurewicz,
Jumps of entropy in one dimension.
\emph{Fund. Math.} \textbf{132}~(1989), no.~3, 215--226.

\bibitem{M2}
M.~Misiurewicz,
Possible jumps of entropy for interval maps.
\emph{Qual. Theory Dyn. Syst.} \textbf{2}~(2001), no.~2, 289--306.

\bibitem{MR}
M.~Misiurewicz and A.~Rodrigues,
Counting preimages.
\emph{Ergod. Th. \& Dynam. Sys.}  Published online 24 January, 2017. doi: 10.1017/etds.2016.103. 20 pages.

\bibitem{MS}
M.~Misiurewicz and W.~Szlenk,
Entropy of piecewise monotone mappings.
\emph{Studia Math.} \textbf{67}~(1980), no.~1, 45--63.

\bibitem{P}
W.~Parry,
Symbolic dynamics and transformations of the unit interval.
\emph{Trans. Amer. Math. Soc.} \textbf{122}~(1966), 368--378.

\bibitem{Ru}
S.~Ruette,
\emph{Chaos on the interval.}
University Lecture Series, 67.
American Mathematical Society, Providence, RI, 2017.

\bibitem{VJ}
D.~Vere-Jones
Ergodic properties of nonnegative matrices--I.
\emph{Pacific J. Math.} \textbf{22}~(1967), no. 2, 361--386.


\end{thebibliography}
\end{document}